\numberwithin{equation}{section}
\theoremstyle{plain}
\newtheorem{theorem}{Theorem}[section]
\newtheorem{lemma}[theorem]{Lemma}
\newtheorem{corollary}[theorem]{Corollary}
\theoremstyle{definition}
\newtheorem{definition}[theorem]{Definition}
\newtheorem{case[theorem]}{Case}
\theoremstyle{remark}
\numberwithin{equation}{section}
\begin{document}

\title[Three-point configurations determined by subsets of $\mathbb{F}_q^2$]{\parbox{14cm}{\centering{Three-point configurations determined by subsets of $\mathbb{F}_q^2$ via the Elekes-Sharir paradigm}}}

\author{Mike Bennett, Alex Iosevich and Jonathan Pakianathan}
\address{Department of Mathematics \\ University of Rochester\\ Rochester, NY 14627}
\email{bennett@math.rochester.edu}
\address{Department of Mathematics \\ University of Rochester\\ Rochester, NY 14627}
\email{iosevich@math.rochester.edu}
\address{Department of Mathematics \\ University of Rochester\\ Rochester, NY 14627}
\email{jonpak@math.rochester.edu}

\thanks{The authors were supported by NSF grant DMS-1045404.}

\begin{abstract} We prove that if $E \subset {\mathbb F}_q^2$, $q \equiv 3 \mod 4$, has size greater than $Cq^{\frac{7}{4}}$, then $E$ determines a positive proportion of all congruence classes of  triangles in ${\mathbb F}_q^2$. 

The approach in this paper is based on the approach to the Erd\H os distance problem in the plane due to Elekes and Sharir, followed by an incidence bound for points and lines in ${\mathbb F}_q^3$. We also establish a weak lower bound for a related problem in the sense that any subset $E$ of ${\mathbb F}_q^2$ 
of size less than $cq^{\frac{4}{3}}$ definitely does not contain a positive proportion of 
{\bf translation} classes of triangles in the plane. This result is a special case of a result established 
for $n$-simplices in ${\mathbb F}_q^d$. 
Finally, a necessary and sufficient condition on the lengths of a triangle for it to exist 
in $\mathbb{F}^2$ for any field $\mathbb F$ of characteristic not equal to 2 is established as a 
special case of a result for $d$-simplices in ${\mathbb F}^d$.
\end{abstract} 

\maketitle


\section{Introduction} 

\vskip.125in 

A classical problem in geometric combinatorics is the Erd\H os distance problem. It asks for the number of distinct distances determined by the set of $n$ points in the Euclidean plane. In 1945, Erd\H os (\cite{Erd46}) conjectured that $n$ points determine at least $C\frac{n}{\sqrt{\log(n)}}$ distinct distances and in 2010, culminating more than half a century of efforts by many mathematicians, the conjecture was essentially established by Larry Guth and Nets Katz (\cite{GK10}) who proved that $n$ points determine at least $C \frac{n}{\log(n)}$ distinct distances. 

Another important related conjecture by Erd\H os is the single distance conjecture, which says that among $n$ points in the plane, a single distance cannot repeat more than $Cn^{1+o(1)}$ times. The best currently known result in this direction is one that essentially follows from the Szemeredi-Trotter incidence theorem (\cite{ST83}), namely that a single distance cannot repeat more than $Cn^{\frac{4}{3}}$ times. 

A related series of questions was posed by Erd\H os and Purdy. See, for example, \cite{EP71}, \cite{EP75}, \cite{EP76}, \cite{EP77}, \cite{EP78}, \cite{EP95} and \cite{BMP05}. The question is, how many distinct non-congruent triangles are determined by a set of $n$ points in the plane? And, in analogy with the Erd\H os single distance conjecture, how often can a single non-degenerate triangle repeat among $n$ points in the plane? The first question is now essentially resolved, as it follows from the Guth-Katz solution of the Erd\H os distance conjecture that a set of $n$ points in the plane determines at least $\frac{n^2}{\log(n)}$ non-congruent triangles. 

The second question is still very much open. In order for a triangle to repeat, each side-length needs to repeat, so in view of  Szekely's version (\cite{Sz97}) of the Szemeredi-Trotter incidence theorem, as mentioned above, a single triangle cannot repeat more than $Cn^{\frac{4}{3}}$ times. However, one might think that one should be able to improve this exponent for triangles. Nevertheless, no improvement is currently available for general sets. In the context of homogeneous sets, the exponent $\frac{4}{3}$ has recently been improved to $\frac{9}{7}$ by Greenleaf and the second listed author (\cite{GI10}) by solving an analytic variant of the problem and then applying a continuous-to-discrete conversion mechanism developed in various contexts by the second listed author and S. Hofmann and I. Laba (\cite{HI05}, \cite{IL05}). See also \cite{IJL09} where discrete incidence theorems are studied using analytic methods. See \cite{Falc86} and \cite{M95} for the background material on the continuous analogs of these problems. 

Vector spaces over finite fields have proven to be a sort of an intermediate battlefield, where ideas from the discrete Euclidean and continuous Euclidean settings blend with arithmetic considerations to produce an interesting amalgamation. In the context of distance sets, results in the finite field setting were proved by Bourgain, Katz and Tao (\cite{BKT04}, \cite{IR07} and \cite{CEHIK11}). To illustrate just one of the annoying pitfalls in vector spaces over finite fields, consider ${\mathbb F}_q^2$, where $q$ is a prime $ \equiv 1 \mod 4$, let $i=\sqrt{-1}$ and define $ E=\{(t, it): t \in {\mathbb F}_q \}.$ If we let 
$$\Delta(E)=\{ ||x-y||={(x_1-y_1)}^2+{(x_2-y_2)}^2: x,y \in E \},$$ we arrive at the conclusion that while $\# E=q$, 
$\Delta(E)=\{0\}$. In view of examples of this type, the question we tend to ask in this setting is, how large does a subset of ${\mathbb F}_q^2$ need to be to ensure that a positive proportion (or all) of the point configurations of a given type are realized? 

In this paper we study the distribution of triangles determined by subsets of ${\mathbb F}_q^2$. This question was previously studied in \cite{CEHIK11} and \cite{CHISU11}. In the former paper, a non-trivial exponent was obtained for triangles in dimensions three and higher and the methods did not say much about dimension two. In the latter paper, results were obtained for triangles in the plane, but only for subsets of ${\mathbb F}_q^2$ of positive density, in analogy with ergodic type results in the continuous setting. See, for example, \cite{B86}, \cite{FKW90}, \cite{Z06} and the references contained therein for the background on these types of problems in the Euclidean setting. In general, when considering configurations of $n$ points in $d$-dimensional space, 
techniques that work for $d > n-1$ tend to break down at the critical case $n-1=d$
where new arithmetic difficulties arise.

In this paper, we are able to obtain non-trivial exponents for triangles in the plane by using an Elekes-Sharir (\cite{ES11}) paradigm to reduce the problem to that of incidences between points and lines in ${\mathbb F}_q^3$ (see Theorem \ref{vinh+} below) and then employing a counting technique. It is interesting to note that this is a completely different argument that yields the same exponent for the continuous analog of this problem proved by Greenleaf and the second listed author (\cite{GI10}). It turns out that the bi-linear approach that worked well in the continuous setting fails in the finite field setting due to arithmetic considerations, but a combinatorial point of view saves the day. Recall that two $n$-configurations $(x^1,\dots,x^n)$ and $(y^1,\dots, y^n)$ are congruent if there exists $\theta \in O_d({\Bbb F}_q)$, the orthogonal group of $d \times d$ matrices over ${\Bbb F}_q$, and $\tau \in {\Bbb F}_q^d$ such that $y^j=\theta x^j+\tau$. In particular, $||x^i-x^j||=||y^i-y^j||$ for all $i,j$. Our main result is the following:

\begin{theorem} \label{main} Suppose that $E \subset {{\mathbb F}_q}^2$ with $q$ a prime $\equiv 3 \bmod{4}$ and $|E| > C_1q^{\frac{7}{4}}$ for some $C_1>\sqrt[4]{2}$. Let $T_2(E)$ denote the set of congruence classes of triangles determined by $E$. Then there exists $C_2>0$ depending only on $C_1$ such that $|T_2(E)| \ge C_2q^3$. \end{theorem}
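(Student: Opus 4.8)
We outline the proof along the lines advertised in the introduction.

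\emph{Reduction to a counting problem.} By the SSS criterion a non-degenerate triangle in $\mathbb{F}_q^2$ is determined up to congruence by its ordered triple of squared side lengths, and — since $q\equiv 3\bmod 4$, so that $\mathbb{F}_q^2$ carries no nonzero isotropic vectors — this is essentially a bijection between congruence classes and realizable triples $(||x-y||,||x-z||,||y-z||)$. Hence, up to the contribution of collinear triples and of triples with a repeated vertex, $|T_2(E)|$ is comparable to the number of distinct vectors $(||x-y||,||x-z||,||y-z||)$ with $(x,y,z)\in E^3$. By Cauchy--Schwarz,
\[
|T_2(E)|\ \gtrsim\ \frac{|E|^6}{D(E)},\qquad
D(E):=\#\bigl\{((x,y,z),(x',y',z'))\in E^3\times E^3:\ \text{all three squared side lengths agree}\bigr\},
\]
so it suffices to show $D(E)\le |E|^6/q^3+2q^4|E|^2$ plus lower-order terms.

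\emph{The Elekes--Sharir step.} Two non-degenerate triples with equal side-length vectors are carried onto one another by a unique element of $G:=O_2(\mathbb{F}_q)\ltimes\mathbb{F}_q^2$, whence $D(E)\le\sum_{g\in G}r(g)^3$ with $r(g):=|E\cap g^{-1}E|$. Because $q\equiv 3\bmod 4$, the group $SO_2(\mathbb{F}_q)$ is cyclic of order $q+1$ and every orientation-preserving rigid motion other than a translation has a unique fixed point; parametrizing such a motion by its center in $\mathbb{F}_q^2$ together with a stereographically parametrized rotation angle gives a copy of $\mathbb{F}_q^3$ in which, after the Elekes--Sharir change of coordinates, each set $\ell_{x,x'}:=\{g\in G:gx=x'\}$ is a line (translations and the exceptional angle form lower-order pieces). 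Thus $r(g)$ is the number of members of the family $\mathcal L(E):=\{\ell_{x,x'}:x,x'\in E\}$, of size $|E|^2$, that pass through the point $g$, and the total incidence count is $\sum_g r(g)=|\mathrm{Stab}_G(x)|\cdot|E|^2\sim q|E|^2$, so the average of $r$ over $G$ is $\sim |E|^2/q^2$.

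\emph{Applying the incidence bound and balancing.} Fix $\Lambda\sim |E|^2/q^2$. Motions with $r(g)\le\Lambda$ contribute $\sum_{r(g)\le\Lambda}r(g)^3\le\Lambda^2\sum_g r(g)\lesssim |E|^6/q^3$, the expected main term. For the rich motions $r(g)>\Lambda$ we pigeonhole the range of $r$ into dyadic scales $\lambda$ and apply Theorem~\ref{vinh+} to the set of points $\{g:r(g)\ge\lambda\}$ and the line family $\mathcal L(E)$: since each such point meets strictly more lines than the main term of the incidence bound permits, the number of points at scale $\lambda$ is forced below $\lambda^{-2}$ times the error term of Theorem~\ref{vinh+}; summing over scales and using only the trivial inequalities $r(g)\le|E|\le q^2$ and $\#\{g:r(g)>\Lambda\}\le q^3$ yields $\sum_{r(g)>\Lambda}r(g)^3\lesssim q^4|E|^2$. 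Combining, $D(E)\le |E|^6/q^3+2q^4|E|^2$ up to lower-order terms (all $O(|E|^4)$, hence negligible once $|E|\gg q^{3/2}$). The hypothesis $|E|>C_1q^{7/4}$ with $C_1>\sqrt[4]{2}$ is exactly $|E|^4>2q^7$, i.e.\ $2q^4|E|^2<|E|^6/q^3$, so $D(E)<2|E|^6/q^3$ and therefore $|T_2(E)|\gtrsim |E|^6/D(E)\gtrsim q^3$, with $C_2$ of the form $C_1^4/\bigl(C_1^4+2\bigr)$ up to an absolute factor, depending only on $C_1$.

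\emph{Where the difficulty lies.} The sole nontrivial input is the control of the rich motions. For structured $E$ — say a Cartesian product — a great many motions (translations along difference-set vectors, reflections, small rotations) have $r(g)$ far above average, and $\sum_g r(g)^3$ is dominated by precisely these; the only available leverage is the geometry of the lines $\ell_{x,x'}$ inside $\mathbb{F}_q^3$, so the argument stands or falls with the strength of Theorem~\ref{vinh+}, and the exponent $\tfrac74$ is what that estimate delivers after the dyadic accounting. A secondary but necessary chore is to check that the degenerate configurations set aside above (collinear triples, the $x=y$ slices of $D(E)$, coincidences along reflection axes, and triangles with a zero-length side — which cannot occur here since $q\equiv 3\bmod 4$) really are of lower order; this again uses $q\equiv 3\bmod 4$ together with the observation that a set of size $|E|$ contains at most $|E|/q$ full lines.
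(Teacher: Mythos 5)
Your argument is correct in outline, but it follows a genuinely different route from the paper's. The paper never forms the third-moment energy $\sum_g r(g)^3$: it fixes a single side length $\ell$ and a single base segment $(x_0,y_0)$ of that length, uses the Iosevich--Rudnev distance count to produce a set $P$ of roughly $|E|^2/(2q)$ rigid motions (viewed as points of $\mathbb{F}_q^3$ via the same Elekes--Sharir chart) carrying segments of $E$ of length $\ell$ onto the base segment, and then applies Theorem~\ref{vinh+} to the points of $P$ against the lines $\{\ell_{x\to y}:x\in E^c\}$ to show that $E'=\bigcup_{(p,\theta)\in P}f_{p,\theta}(E)$ misses at most $\frac{2}{C_1^4}q^2(1+o(1))$ points of the plane; since distinct positions of the third vertex over the fixed base segment give distinct congruence classes (up to multiplicity two), this yields $\gtrsim q^2$ classes containing the side length $\ell$, and $\gtrsim q^3$ in all. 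Your Guth--Katz-style $L^3$ count with a dyadic rich-point decomposition uses the same two ingredients (the line parametrization of $SF'$ and Theorem~\ref{vinh+}) but packages them globally. One thing your packaging buys, if you sum the dyadic scales as $\sum_\lambda \lambda^3\,\#\{g:r(g)\ge\lambda\}\lesssim q^2|E|^2\sum_\lambda\lambda\lesssim q^2|E|^3$ instead of invoking $r(g)\le q^2$, is that the rich contribution is $O(q^5/|E|^3)=O(q^{-1/4})$ times the main term $|E|^6/q^3$, so the energy argument actually closes already for $|E|\gtrsim q^{5/3}$, which is stronger than the stated theorem; the paper's pinned-segment argument is what produces the particular constant $\sqrt[4]{2}$.

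Two points need attention before your sketch is a proof. First, the chart $SF'\leftrightarrow\mathbb{F}_q^3$ covers only orientation-preserving non-translations, while two non-degenerate triples with equal side lengths may be related by an orientation-reversing isometry; the sets $\{g \text{ orientation-reversing}: gx=x'\}$ are not lines in that chart. This is fixable (replace $E$ by $\rho E$ for one fixed reflection $\rho$ and rerun the incidence bound), but it must be said, as must the bound $\sum_{g\in T}r(g)^3\le |E|\cdot E_+(E)\le|E|^4$ for the translation coset. Second, your stated estimate $\sum_{r(g)>\Lambda}r(g)^3\lesssim q^4|E|^2$, derived via $r(g)\le q^2$ and a sum over $O(\log q)$ dyadic scales, carries a logarithm and loose absolute constants, so the precise inequality $D(E)\le |E|^6/q^3+2q^4|E|^2$ and hence the exact threshold $C_1>\sqrt[4]{2}$ do not follow from the accounting as written; the cleaner summation above repairs this and more.
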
 

We also establish a lower bound for a related question about translation classes which is a special case of a result we establish for $n$-configurations in $d$-dimensional space.  

\begin{definition} Two configurations are in the same translation class if there is a {\bf translation} taking one to the other. \end{definition} 

\begin{theorem}\label{second} Suppose $E$ is a subset of a finite field $\mathbb{F}_q$ 
that contains a positive proportion of {\bf translation} classes of $n$-configurations in $d$-dimensional space 
$\mathbb{F}_q^d$, then there is a constant $c > 0$ independent of $q$ such that 
$|E| \geq cq^{d(1-\frac{1}{n})}$. Thus if $|E|$ has order of magnitude less than 
$q^{d(1-\frac{1}{n})}$ then $E$ definitely does not contain a positive proportion of {\bf translation} classes of $n$-configurations in $\mathbb{F}_q^d$. In particular, a subset $E \subseteq \mathbb{F}_q^2$ of magnitude less than $q^{\frac{4}{3}}$ definitely does not contain 
a positive proportion of translation classes of triangles in $\mathbb{F}_q^2$.
\end{theorem}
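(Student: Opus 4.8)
The plan is to prove Theorem \ref{second} by a direct pigeonhole count; no incidence geometry is needed here.

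First I would fix notation. An $n$-configuration in $\mathbb{F}_q^d$ is an ordered $n$-tuple $(x^1,\dots,x^n)$ of points of $\mathbb{F}_q^d$, and the additive group $\mathbb{F}_q^d$ acts on the set of such tuples by $\tau\cdot(x^1,\dots,x^n)=(x^1+\tau,\dots,x^n+\tau)$. The first observation is that this action is \emph{free}: if $x^j+\tau=x^j$ for even a single index $j$, then $\tau=0$. Consequently every translation class (orbit) has exactly $q^d$ elements, and since there are $q^{dn}$ configurations in all, the number of distinct translation classes is exactly $q^{dn}/q^{d}=q^{d(n-1)}$. (If one prefers to require the $n$ points to be pairwise distinct, or to be an honest $n$-simplex in general position, the count of configurations, and hence of translation classes, changes only by a factor $1+o(1)$, so what follows is unaffected.)

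Next I would unwind the hypothesis. Saying that $E$ contains a positive proportion of translation classes means there is a constant $c'>0$, independent of $q$, such that the number $N(E)$ of translation classes possessing a representative $(x^1,\dots,x^n)$ with $x^1,\dots,x^n\in E$ satisfies $N(E)\ge c'\,q^{d(n-1)}$. On the other hand, each such class contributes at least one ordered $n$-tuple of elements of $E$, and since every $n$-tuple lies in a unique translation class, distinct classes supply distinct tuples; therefore $N(E)\le |E|^n$. Combining the two inequalities gives $|E|^n\ge c'\,q^{d(n-1)}$, hence $|E|\ge c\,q^{d(1-\frac1n)}$ with $c=(c')^{1/n}$. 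Taking $n=3$ and $d=2$ yields the threshold $q^{4/3}$ for triangles in the plane, and the contrapositive is precisely the ``definitely does not contain'' assertion.

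I do not expect a genuine obstacle; the content is entirely in the bookkeeping. The one point that must be handled with care is the uniformity of orbit sizes — which is why the freeness of the translation action is recorded explicitly — and the verification that narrowing ``configuration'' to mean a nondegenerate simplex costs only a $1+o(1)$ factor, so the exponent $d(1-\frac1n)$ survives. If instead one works with unordered configurations, the orbit count changes by at most the bounded factor $n!$, again leaving the conclusion intact.
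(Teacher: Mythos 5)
Your proposal is correct and takes essentially the same route as the paper: the paper phrases the identical pigeonhole argument in group-theoretic language (cosets of the diagonal translation subgroup $T$ inside the multitranslation group $G\cong V^n$, with the condition $E^nT=G$), while you count orbits of the free translation action directly, but in both cases the entire content is the inequality $|E|^n\ge (\text{number of translation classes represented})\ge c'q^{d(n-1)}$.
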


Unfortunately, in regard to getting a lower bound for the congruence class question, we 
only can note the basically trivial bound that if $E$ contains a positive proportion of 
congruence classes of triangles in the plane, then $|E| \geq cq$ for an absolute constant $c > 0$. This is 
discussed in the same section as the proof of Theorem~\ref{second}.

From these facts, the answer to whether a given subset $E$ of the plane 
$\mathbb{F}_q^2$ determines a positive proportion of the triangles in the plane is decidedly ``no" when $|E|$ is of order of magnitude below $q$ and ``yes" if $|E|$ has order of magnitude above $q^{\frac{7}{4}}$, at least when $q \equiv 3$ mod $4$. Between these two orders of magnitude, we do not yet know what happens. It is possible that there is a range where we need information about the structure of $E$ to determine the answer - not just the size of $E$. It is also possible that there is some 
clear threshold between $q$ and $q^{\frac{7}{4}}$, e.g. $q^{\frac{3}{2}}$. 
We leave this question for another time.

Finally we consider the existence of triangles of given length type in the plane 
$\mathbb{F}^2$ for a field $\mathbb{F}$ of characteristic not equal to 2. As mentioned 
before, since this is a critical $n-1=d$ case, we encounter obstructions that do not occur when $n-1 < d$. 
In particular we show:

\begin{theorem}\label{third}
Let $\mathbb{F}$ be a field of characteristic not equal to $2$. Let $\ell_1,\ell_2,\ell_3$ 
be three prescribed ``lengths" in $\mathbb{F}$. Then a triangle with sidelengths $\ell_1, \ell_2, \ell_3$ exists in the plane $\mathbb{F}^2$ if and only if $4\sigma_2 - \sigma_1^2$ 
is a square in $\mathbb{F}$ where $\sigma_1=\ell_1+\ell_2+\ell_3$ and 
$\sigma_2=\ell_1\ell_2 + \ell_2\ell_3+\ell_3\ell_1$.

In particular the plane $\mathbb{F}^2$ contains equilateral triangles of nonzero sidelength if and only if $\sqrt{3} \in \mathbb{F}$.
\end{theorem}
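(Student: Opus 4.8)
The plan is to set up coordinates for a prospective triangle and reduce the existence question to the solvability of a single quadratic equation over $\mathbb{F}$. Place one vertex at the origin $x^1 = (0,0)$. Since $\mathbb{F}$ has characteristic not $2$, we may assume we are looking for a triangle with vertices $x^1, x^2, x^3$ such that the squared ``lengths'' are $\|x^1 - x^2\| = \ell_3$, $\|x^2 - x^3\| = \ell_1$, $\|x^3 - x^1\| = \ell_2$ (the indexing just matches each $\ell_i$ to the opposite vertex). Writing $x^2 = a$ and $x^3 = b$ as vectors in $\mathbb{F}^2$, the conditions become $\|a\| = \ell_3$, $\|b\| = \ell_2$, and $\|a - b\| = \|a\| - 2\langle a, b\rangle + \|b\| = \ell_1$, where $\langle\cdot,\cdot\rangle$ is the standard bilinear form. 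Hence the problem is: do there exist $a, b \in \mathbb{F}^2$ with $\|a\| = \ell_3$, $\|b\| = \ell_2$, and $\langle a, b\rangle = \tfrac{1}{2}(\ell_2 + \ell_3 - \ell_1)$? This is exactly the question of realizing a prescribed $2\times 2$ Gram matrix $G = \begin{pmatrix} \ell_3 & t \\ t & \ell_2 \end{pmatrix}$, with $t = \tfrac12(\ell_2+\ell_3-\ell_1)$, by two vectors in the standard quadratic space $(\mathbb{F}^2, \|\cdot\|)$.

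Next I would invoke the classification of when a symmetric bilinear form of rank $\le 2$ embeds into the standard form on $\mathbb{F}^2$. Over a field of characteristic not $2$, a nondegenerate symmetric bilinear form in $2$ variables is determined up to isometry by its discriminant (and, for finite fields, that is a complete invariant by Chevalley–Warning/the standard classification of quadratic forms over $\mathbb{F}_q$). The standard form $x_1^2 + x_2^2$ on $\mathbb{F}^2$ has discriminant $1$. The Gram matrix $G$ has determinant $\ell_2\ell_3 - t^2$; a short computation with $t = \tfrac12(\ell_2+\ell_3-\ell_1)$ gives $4\det G = 4\ell_2\ell_3 - (\ell_2+\ell_3-\ell_1)^2 = -\ell_1^2 - \ell_2^2 - \ell_3^2 + 2\ell_1\ell_2 + 2\ell_2\ell_3 + 2\ell_3\ell_1 = 4\sigma_2 - \sigma_1^2$, using $\sigma_1^2 = \sum \ell_i^2 + 2\sigma_2$. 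So the natural necessary-and-sufficient condition that $G$ be realized by two vectors spanning a nondegenerate plane isometric to the standard plane is precisely that $\det G$ be a nonzero square times the ``right'' class, i.e. that $4\sigma_2 - \sigma_1^2$ be a (nonzero) square in $\mathbb{F}$. I would then handle the degenerate edge cases: when $4\sigma_2 - \sigma_1^2 = 0$ (so $G$ is degenerate) one checks directly whether the rank-one or rank-zero $G$ still embeds — here one must be slightly careful because the standard plane does contain isotropic vectors exactly when $-1$ is a square, and a separate elementary argument shows the triangle still exists in that borderline case, so the clean statement ``square'' (including $0$) is the correct one. The final clause about equilateral triangles follows by plugging $\ell_1 = \ell_2 = \ell_3 = \ell \neq 0$: then $\sigma_1 = 3\ell$, $\sigma_2 = 3\ell^2$, so $4\sigma_2 - \sigma_1^2 = 12\ell^2 - 9\ell^2 = 3\ell^2$, which is a square iff $3$ is a square, i.e. iff $\sqrt{3} \in \mathbb{F}$.

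The main obstacle I anticipate is not the ``generic'' case — that is essentially the theory of quadratic forms in two variables — but rather (i) pinning down the degenerate cases and the behavior of isotropic vectors so that the single clean criterion ``$4\sigma_2 - \sigma_1^2$ is a square'' really is both necessary and sufficient in all cases, and (ii) making sure the argument is genuinely field-theoretic rather than relying on finiteness of $\mathbb{F}$, since Theorem~\ref{third} is asserted for an arbitrary field of characteristic $\neq 2$. Point (ii) forces me to replace any counting/Chevalley–Warning shortcut with the structural statement: two vectors with prescribed Gram matrix $G$ exist in $(\mathbb{F}^2, \|\cdot\|)$ iff the binary form with matrix $G$ (or a $1$- or $0$-dimensional form, in the degenerate cases) embeds isometrically into $x_1^2 + x_2^2$, and then to extract the discriminant condition from Witt's cancellation/extension theorem. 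I expect the actual write-up to proceed: (1) reduce to the Gram matrix formulation; (2) compute $4\det G = 4\sigma_2 - \sigma_1^2$; (3) nondegenerate case via discriminant comparison and Witt extension; (4) degenerate case by a direct construction; (5) deduce the equilateral corollary. This also dovetails with the higher-dimensional $d$-simplex version the authors mention, where the same Gram-matrix-embedding philosophy applies with $d$ vectors in $\mathbb{F}^d$.
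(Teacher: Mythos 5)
Your reduction to the Gram matrix $G=\left(\begin{smallmatrix}\ell_3 & t\\ t & \ell_2\end{smallmatrix}\right)$ with $t=\tfrac12(\ell_2+\ell_3-\ell_1)$ and the identity $4\det G=4\sigma_2-\sigma_1^2$ are correct, and the ``only if'' direction comes out cleaner this way than in the paper: if the triangle exists and $M$ is the matrix whose rows are the two edge vectors at one vertex, then $G=MM^T$, so $\det G=(\det M)^2$ is a square with no case analysis (this is exactly the mechanism behind the paper's Theorem 4.6). The genuine gap is in your sufficiency step (3): over an arbitrary field of characteristic $\neq 2$, a nondegenerate binary symmetric form is \emph{not} determined up to isometry by its discriminant --- $\langle 1,1\rangle$ and $\langle -1,-1\rangle$ both have discriminant $1$ but are not isometric over $\mathbb{Q}$ or $\mathbb{R}$ --- and Witt cancellation cannot rescue this, since to cancel a common summand you must first exhibit a common represented value, which is precisely what is at stake. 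Discriminant \emph{is} a complete invariant for nondegenerate binary forms over a finite field of odd characteristic, so your plan does close over $\mathbb{F}_q$; but your worry (ii), that the argument be genuinely field-theoretic, cannot be fully satisfied, because the ``if'' direction of the statement fails for general $\mathbb{F}$. Over $\mathbb{R}$ take $\ell_1=\ell_2=\ell_3=-1$: then $4\sigma_2-\sigma_1^2=3$ is a square, yet no triangle with negative squared side lengths exists in $\mathbb{R}^2$. Over $\mathbb{Q}$ take $(\ell_1,\ell_2,\ell_3)=(3,3,12)$: then $4\sigma_2-\sigma_1^2=0$ is a square, yet $3$ is not a sum of two rational squares. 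Any correct sufficiency proof must invoke universality of $x^2+y^2$ (true over $\mathbb{F}_q$, false over $\mathbb{Q}$ and $\mathbb{R}$), and you should make that hypothesis explicit.

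For comparison, the paper argues more elementarily: it fixes a segment of nonzero length $\ell_1$ with endpoints $(0,0)$ and $(a,b)$, writes the third vertex as $(c,d)$, eliminates $d$ via the linear relation $ac+bd=\tfrac{\ell_1+\ell_3-\ell_2}{2}$, and obtains a single quadratic in $c$ whose discriminant is $b^2(4\sigma_2-\sigma_1^2)$; this yields the sharper count of exactly $2$, $1$ or $0$ completions of a given segment. That route also quietly presupposes that a segment of length $\ell_1$ exists --- automatic over $\mathbb{F}_q$ but not over general fields --- so the caveat above applies to both arguments. If you restrict to finite fields (or assume the standard binary form is universal), your outline (1)--(5), including the separate treatment of the degenerate case $\det G=0$, is a viable and genuinely different proof from the paper's.
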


For example, this result shows that no equilateral triangle in the Euclidean plane can 
have rational coordinates as $\sqrt{3} \notin \mathbb{Q}$ and that they exist in only 
half the planes over finite fields. Of course this particular consequence is essentially the basic 
Galois theory result that to construct an equilateral triangle, you must have sine of a $60$-degree angle i.e., $\frac{\sqrt{3}}{2}$ in your field. See the section proving this theorem for more examples. 
In that section we also establish a result for configurations of $(d+1)$ points 
in $d$-dimensional space $\mathbb{F}^d$ (the critical case).
 The higher dimensional existence result comes down 
to whether or not a symmetric matrix determined by pairwise lengths of the configuration 
is of the ``positive definite" form $A^T A$ or not, as explained in that section.

\section{Proof of the main result} 

\vskip.125in 

We begin by verifying that the Elekes-Sharir (\cite{ES11}) technique of turning a problem of distances in $\mathbb{R}^2$ to a problem of counting incidences between points and lines in $\mathbb{R}^3$ is consistent over finite fields $\mathbb{F}_q$ where $q \equiv 3 \bmod{4}$. Consider the group $SF(2,q)$ of ``positively oriented rigid motions" in $\mathbb{F}_q^2$, the finite field analogue of the two-dimensional oriented Euclidean group $SE(2)$. This is defined as follows:
Let
$$SO(2,q) = \{ A \in Mat_2(\mathbb{F}_q) | A^TA=I, det(A)=1 \}=\left\{ 
\begin{bmatrix} a & -b \\ b & a \end{bmatrix} | a,b \in \mathbb{F}_q , a^2+b^2 = 1 \right\}.$$
$SO(2,q)$ acts on the plane $P=\mathbb{F}_q^2$ in the standard way. 
In addition, the group of translations $T=\{T_a | a \in P \}$ acts on $P$ via 
$T_a(x)=x+a$. $SF(2,q)$ is the subgroup of $\Sigma(P)$, the group of permutations of the set $P$,  generated by $SO(2,q)$ and $T$, i.e. the set 
of transformations of the plane $P$ generated by ``rotations" and translations. 

It is not hard to show that $SO(2,q)$ is a subgroup of $SF(2,q)$ and that $T$ is a normal subgroup 
of $SF(2,q)$ with $T \cdot SO(2,q) = SF(2,q)$, $T \cap SO(2,q) = \{e\}$. This corresponds to 
the fact that a transformation $\psi \in SF(2,q)$ is of the form 
$\psi(x)=Ax+b$ for unique $A \in SO(2,q)$ and $b \in P$. Thus as a set $SF(2,q)$ 
is bijective to $\mathbb{F}_q^2 \times SO(2,q)$ via the map 
$\psi \leftrightarrow (b,A)$. However, $SF(2,q)$ is not isomorphic to the direct product as a group but rather to the semidirect product $\mathbb{F}_q^2 \rtimes SO(2,q)$ because,
in general, translations do not commute with rotations (see, for example, \cite{R95}).

Now let $SF'$ be the set difference $SF(2,q)-T(2,q)$, where $T(2,q)$ is the translation subgroup. It is well known that $|SO(2,q)| = q+1$ when $q \equiv 3 \bmod{4}$. Thus 
$$|SF'| = |SF(2,q)| - |T(2,q)| = |\mathbb{F}_q^2||SO(2,q)| - q^2 = q^3.$$ 

We will now show that there is a clever bijection from $SF'$ to $\mathbb{F}_q^3$, or more appropriately, 
$\mathbb{F}_q^2 \times \mathbb{F}_q$. Define $f_{p,\theta}: \mathbb{F}_q^2 \rightarrow \mathbb{F}_q^2$ to be rotation by "angle" $\theta \in SO(2,q)$ about the point $p$, i.e.,

$$
f_{p,\theta}(x)=\theta(x-p) + p
$$
if we view $x,p$ as $2$-dimensional column vectors.

\begin{lemma} Let $(x^1, y^1)$ and $(x^2, y^2)$ be segments of equal length such that, when viewed as vectors, $x^1 - y^1 \neq x^2 - y^2$. Then there exists a unique pair $(p,\theta)$ with $p \in \mathbb{F}_q^2$ and $\theta \in SO(2,q) \backslash \{e\}$ so that $f_{p,\theta}(x^1) = x^2$ and $f_{p,\theta}(y^1) = y^2$. \end{lemma}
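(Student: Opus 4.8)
The plan is to prove this by a direct computation, setting up the rotation center $p$ and angle $\theta$ as unknowns and showing the system has a unique solution under the stated hypothesis. First I would observe that any $f_{p,\theta}$ is itself an element of $SF(2,q)$, so writing $f_{p,\theta}(x) = \theta x + (p - \theta p)$ we see that as $\theta$ ranges over $SO(2,q)\setminus\{e\}$ and $p$ ranges over $\mathbb{F}_q^2$, the translation part $p - \theta p = (I-\theta)p$ is uniquely determined by $(p,\theta)$ precisely because $I - \theta$ is invertible for $\theta \neq e$: its determinant is $(1-a)^2 + b^2 = 2 - 2a$, which vanishes only when $a = 1$, i.e.\ $\theta = e$ (using $a^2+b^2=1$ and that $q \equiv 3 \bmod 4$ so $-1$ is not a square, hence $b = 0$ forces $a = \pm 1$, and $a=-1$ gives $\det(I-\theta) = 4 \neq 0$ in characteristic $\neq 2$ — actually one must be slightly careful: $\theta = -I$ is allowed and $I - (-I) = 2I$ is invertible, so the map $p \mapsto (I-\theta)p$ is still a bijection). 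So for each fixed $\theta \neq e$, the equation $f_{p,\theta}(x^1) = x^2$ determines $p$ uniquely.

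The core of the argument is then to pin down $\theta$. The conditions $f_{p,\theta}(x^1) = x^2$ and $f_{p,\theta}(y^1) = y^2$ give, by subtraction, $\theta(x^1 - y^1) = x^2 - y^2$. Since $\|x^1 - y^1\| = \|x^2 - y^2\|$ by the equal-length hypothesis, and since $SO(2,q)$ acts transitively on vectors of a given nonzero norm (this is where $q \equiv 3 \bmod 4$ matters — the ``circle'' $a^2+b^2 = r$ is a single orbit), there exists $\theta \in SO(2,q)$ with $\theta(x^1-y^1) = x^2 - y^2$, and it is unique as long as $x^1 - y^1 \neq 0$; if $x^1 - y^1 = 0$ then $x^2 - y^2 = 0$ too, so $x^1 = y^1$ and $x^2 = y^2$ and the conclusion would be vacuous or the segments degenerate — I would note the hypothesis $x^1 - y^1 \neq x^2 - y^2$ rules out the stabilizer ambiguity but I should check the $r=0$ case, where over $q \equiv 3 \bmod 4$ the only vector of norm zero is the zero vector, so this is fine. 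The hypothesis $x^1 - y^1 \neq x^2 - y^2$ is exactly what guarantees the unique such $\theta$ is $\neq e$: if $\theta = e$ then $x^1 - y^1 = \theta(x^1-y^1) = x^2 - y^2$, a contradiction.

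Having fixed this unique $\theta \in SO(2,q)\setminus\{e\}$, I would then solve $f_{p,\theta}(x^1) = x^2$, i.e.\ $\theta(x^1 - p) + p = x^2$, equivalently $(I - \theta)p = x^2 - \theta x^1$, which has the unique solution $p = (I-\theta)^{-1}(x^2 - \theta x^1)$ since $I - \theta$ is invertible. Finally I must verify the second equation $f_{p,\theta}(y^1) = y^2$ is automatically satisfied by this $(p,\theta)$: indeed $f_{p,\theta}(y^1) - f_{p,\theta}(x^1) = \theta(y^1 - x^1) = y^2 - x^2$ by our choice of $\theta$, and $f_{p,\theta}(x^1) = x^2$, so $f_{p,\theta}(y^1) = y^2$ as required. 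Uniqueness of the pair $(p,\theta)$ follows since $\theta$ was forced by the vector equation and $p$ was then forced by invertibility of $I - \theta$.

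The main obstacle I anticipate is the transitivity claim for the $SO(2,q)$-action on vectors of fixed nonzero norm, together with handling the norm-zero case cleanly; both rely essentially on $q \equiv 3 \bmod 4$ (so that $-1$ is a non-residue, making $a^2 + b^2 = 0$ have only the trivial solution and $a^2 + b^2 = r$ a nonempty single orbit of size $q+1$ for each nonzero $r$). This is standard but deserves a careful statement, and it is the one place where the arithmetic hypothesis on $q$ is genuinely used rather than just convenient; everything else is linear algebra over $\mathbb{F}_q$.
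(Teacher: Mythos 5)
Your proof is correct and complete. Note that the paper itself gives no argument here --- it simply asserts that ``the proof is the same as in the Euclidean case,'' where the classical argument locates the center $p$ as the intersection of the perpendicular bisectors of the segments $x^1x^2$ and $y^1y^2$. Your route is different and arguably cleaner for the finite-field setting: you first force $\theta$ by the vector equation $\theta(x^1-y^1)=x^2-y^2$, using that $SO(2,q)$ acts simply transitively on each ``circle'' of nonzero radius (orbit size $q+1$ matches circle size $q+1$ since the stabilizer of a nonzero vector is trivial), and then recover $p$ uniquely from $(I-\theta)p = x^2-\theta x^1$ via $\det(I-\theta)=2-2a\neq 0$ for $\theta\neq e$. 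This correctly isolates the two places the hypothesis $q\equiv 3\bmod 4$ actually enters (only the zero vector has norm zero, so the degenerate case is excluded by the hypothesis $x^1-y^1\neq x^2-y^2$; and the circle has exactly $q+1$ points so transitivity holds), whereas the invertibility of $I-\theta$ needs only characteristic $\neq 2$. Your parenthetical digression about $a=\pm 1$ is unnecessary --- the determinant computation alone settles invertibility --- but it does no harm. The only point worth tightening is the hedge ``the conclusion would be vacuous or the segments degenerate'': as you essentially observe, $x^1=y^1$ together with equal lengths forces $x^2=y^2$ and hence $x^1-y^1=x^2-y^2$, contradicting the hypothesis outright, so the degenerate case simply cannot occur.
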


The proof of this is the same as in the Euclidean case. 

\begin{lemma} Let $x,y \in \mathbb{F}_q^2$. Define $l_{x \rightarrow y} = \{(p,\theta) \in SF' : f_{p,\theta}(x) = y\}$. If $SO(2,q)\backslash \{e\}$ is parametrized in the appropriate way, then $l_{x \rightarrow y}$ is a line in $\mathbb{F}_q^3$. \end{lemma}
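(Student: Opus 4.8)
The plan is to write down explicitly the condition $f_{p,\theta}(x)=y$ and show that, in the right coordinates on $SF'$, it becomes an affine-linear equation in three unknowns whose solution set is a line. Write $\theta \in SO(2,q)\setminus\{e\}$ as $\begin{bmatrix} a & -b \\ b & a\end{bmatrix}$ with $a^2+b^2=1$ and $\theta\ne I$, so $b\ne 0$. The equation $f_{p,\theta}(x)=\theta(x-p)+p=y$ rearranges to $(I-\theta)p = y-\theta x$, i.e. $(I-\theta)(p) = y-\theta x$. The matrix $I-\theta=\begin{bmatrix} 1-a & b \\ -b & 1-a\end{bmatrix}$ has determinant $(1-a)^2+b^2 = 2(1-a)$, which is nonzero precisely because $\theta\ne I$ forces $a\ne 1$ (here we use that $q$ is odd). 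Hence for each fixed $\theta\ne I$ there is a \emph{unique} $p=p(\theta)$ with $f_{p,\theta}(x)=y$, namely $p=(I-\theta)^{-1}(y-\theta x)$. So $l_{x\to y}$ is naturally a graph over the $q$ values $\theta\in SO(2,q)\setminus\{e\}$ — already a set of size $q$ — and the task is to choose coordinates making it straight.

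The key step is the parametrization of $SO(2,q)\setminus\{e\}$. Since $q\equiv 3 \bmod 4$, the conic $a^2+b^2=1$ has $q+1$ points and no point at infinity, and stereographic projection from $(a,b)=(-1,0)$ gives a bijection $t \mapsto \left(\frac{1-t^2}{1+t^2},\ \frac{2t}{1+t^2}\right)$ from $\mathbb{F}_q\cup\{\infty\}$ (the value $t=\infty$ giving $(-1,0)$); the identity $(a,b)=(1,0)$ is omitted by taking $t\in\mathbb{F}_q^\ast\cup\{\infty\}$, or more conveniently one uses the rational parametrization $a=\frac{s^2-1}{s^2+1}$ after a shift so that $\theta\ne e$ corresponds exactly to a full copy of $\mathbb{F}_q$ (one free parameter) plus possibly one extra point. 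Substituting this rational parametrization into $p=(I-\theta)^{-1}(y-\theta x)$ and clearing the denominator $1+t^2$ (equivalently $2(1-a)$), one finds — this is the computation I would carry out carefully — that $p_1$ and $p_2$ are each \emph{affine-linear} in $t$: the quadratic terms in $t$ cancel because $\det(I-\theta)=2(1-a)$ already carries the only quadratic behavior and it divides out. Thus in the coordinates $(p_1,p_2,t)$ on $SF'$, the set $l_{x\to y}$ is $\{(\alpha_1 + \beta_1 t,\ \alpha_2+\beta_2 t,\ t) : t \in \mathbb{F}_q\}$ for constants $\alpha_i,\beta_i$ depending affinely on $x,y$ — a line in $\mathbb{F}_q^3$.

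I expect the main obstacle to be bookkeeping at the ``boundary'' of the parametrization: one must check that the single excluded or extra point of $SO(2,q)\setminus\{e\}$ not covered by the affine parameter $t\in\mathbb{F}_q$ is handled so that $l_{x\to y}$ has exactly $q$ points and is genuinely an honest line (not a line with one point removed and one stray point added). Getting the stereographic/rational parametrization set up so that it is a clean bijection onto $\mathbb{F}_q$ for the relevant part of $SO(2,q)\setminus\{e\}$, using $q\equiv 3\bmod 4$ (so $-1$ is a non-square and $1+t^2$ never vanishes), is the delicate point; once that is fixed, the cancellation of the quadratic terms is a short and forced computation. A secondary, purely cosmetic, point is to record that these constants are compatible with the earlier uniqueness lemma, so that distinct ordered pairs $(x,y)$ give distinct lines, which is what makes the subsequent incidence bound meaningful.
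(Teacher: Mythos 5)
Your proposal is correct and is essentially the paper's proof: the paper uses exactly the rational parametrization $a=\frac{r^2-1}{r^2+1}$, $b=\frac{2r}{r^2+1}$ of $SO(2,q)\setminus\{e\}$ by $\mathbb{F}_q$ (your second option; note your first choice, projecting from $(-1,0)$, hits the identity at $t=0$ and misses $(-1,0)$, so the clean fix is simply to project from $(1,0)$, no ``shift'' needed) and arrives at the same line $\left\{\left(\frac{x+y}{2},0\right)+r\left(\frac{(x-y)^{\perp}}{2},1\right):r\in\mathbb{F}_q\right\}$. The only organizational difference is that the paper writes down this formula and verifies $f_{p_r,\theta_r}(x)=y$ by direct matrix computation, whereas you derive it by inverting $I-\theta$; your route does work, since $(I-\theta)^{-1}=\frac{1}{2}\left(\begin{smallmatrix}1 & -r\\ r & 1\end{smallmatrix}\right)$ and $(I-\theta)^{-1}\theta=(I-\theta)^{-1}-I$ are themselves affine in $r$ (so the promised cancellation is immediate), and it has the small advantage of handling $x=y$ uniformly (then $p=x$ for every $\theta$), a case the paper treats separately.
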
 

To prove the lemma, we view $SO(2,q)$ as a set of matrices in the natural way and define a map $\phi: \mathbb{F}_q \rightarrow SO(2,q)\backslash \{I_2\}$ by 

\vskip.125in 

$ \phi(r) = \left[ \begin{array}{cc}
\frac{r^2-1}{r^2+1} & \frac{-2r}{r^2+1} \\
\frac{2r}{r^2+1} & \frac{r^2-1}{r^2+1} \end{array} \right]$. 

\vskip.125in 

Notice that $r^2+1$ is nonzero for all $r$ since $-1$ is not a square in $\mathbb{F}_q$ when $q \equiv 3 \bmod{4}$. We show that this map is a bijection by verifying injectivity. Suppose that $\frac{r^2-1}{r^2+1} = \frac{s^2-1}{s^2+1}$. Then $r^2-s^2 = s^2-r^2$, which happens if and only if $s = \pm r$. If $s = -r \neq 0$, then $\frac{2r}{r^2+1} \neq \frac{2s}{s^2+1}$. Thus $\phi$ is injective and hence bijective.

Let $x,y \in \mathbb{F}_q^2, x \neq y$. Then for each point $p$ on the perpendicular bisector of $(x,y)$, there exists $\theta \in SO(2,q)$ such that $f_{p, \theta}(x) =  y$. The perpendicular bisector can be defined by 
$$P = \left\{\frac{x+y}{2} + \frac{r(x-y)^\perp}{2} : r \in \mathbb{F}_q \right\},$$ where $(a_1,a_2)^\perp = (a_2,-a_1)$. Suppose $x = (x_1,x_2)$ and $y = (y_1, y_2)$. Then 
$$P = \left\{\left(\frac{x_1+y_1 + r(x_2-y_2)}{2}, \frac{x_2+y_2 - r(x_1-y_1)}{2}\right) : r \in \mathbb{F}_q \right\}.$$ 

Now we calculate $f_{p_r,\theta_r}(x) = p_r + f_{0,\theta_r}(x-p_r)$ where $p_r$ is the element of $P$ corresponding to $r$ and $\theta_r$ is the angle corresponding to $\phi(r)$.

$$p_r + f_{0,\theta_r}(x-p_r) = p_r + 
\left[ \begin{array}{cc}
\frac{r^2-1}{r^2+1} & \frac{-2r}{r^2+1} \\
\frac{2r}{r^2+1} & \frac{r^2-1}{r^2+1} \end{array} \right] 
\left[ \begin{array}{c}
\frac{x_1-y_1 - r(x_2-y_2)}{2} \\
\frac{x_2-y_2 + r(x_1-y_1)}{2} \end{array} \right] = $$

$$\displaystyle p_r + \frac{1}{2(r^2 + 1)}\left[ \begin{array}{cc}
r^2-1 & -2r \\
2r & r^2-1 \end{array} \right] 
\left[ \begin{array}{c}
x_1-y_1 - r(x_2-y_2) \\
x_2-y_2 + r(x_1-y_1) \end{array} \right] = $$

$$\displaystyle p_r + \frac{1}{2(r^2 + 1)}\left[ \begin{array}{c}
(-r^2-1)(x_1-y_1) + (-r-r^3)(x_2-y_2) \\
(r^3+r)(x_1-y_1) + (-r^2-1)(x_2-y_2)\end{array}    \right] = $$

$$\displaystyle \frac{1}{2}\left[ \begin{array}{c}
(x_1+y_1) + r(x_2-y_2) - (x_1-y_1) - r(x_2-y_2)\\
(x_2+y_2) - r(x_1-y_1) + r(x_1-y_1) - (x_2-y_2) \end{array}\right] = y$$.

Thus, for fixed $x$ and $y$, $x \neq y$, the set of pairs $(p, \theta)$ such that $f_{p,\theta}(x) = y$ is precisely 
$$\left\{\left(\frac{x+y}{2},0 \right) + r \left(\frac{(x-y)^\perp}{2},1 \right) : r \in \mathbb{F}_q\right\},$$ which we can view as a line in $\mathbb{F}_q^3$. Note that in the case where $x = y$, even though there is no well-defined perpendicular bisector, the set of $(p,\theta)$ which give $f_{p,\theta}(x) = y$ is still precisely $$\left\{\left(\frac{x+y}{2},0 \right) + r \left(\frac{(x-y)^\perp}{2},1 \right) : r \in \mathbb{F}_q\right\} = \{(x,r) : r \in \mathbb{F}_q\}.$$ This completes the proof of the lemma. 

So each pair of points in $\mathbb{F}_q^2$ has a line $l_{x \rightarrow y}$ in $\mathbb{F}_q^3$ associated with it. Furthermore, notice that if $x,y,z \in \mathbb{F}_q^2, x \neq y$, then $l_{x \rightarrow z}$ and $l_{y \rightarrow z}$ do not intersect. Otherwise, there exists $(p,\theta) \in SF'$ such that $f_{p,\theta}(x) = z$ and $f_{p,\theta}(y) = z$, which clearly cannot happen. Notice, then, that for fixed $y$, $\bigcup_{x \in \mathbb{F}_q^2} l_{x \rightarrow y}$ is the union of $q^2$ disjoint lines in $\mathbb{F}_q^3$, and thus $\bigcup_{y \in \mathbb{F}_q^2} l_{x \rightarrow y} = \mathbb{F}_q^3$.

Now consider our subset $E \subset \mathbb{F}_q^2$. Fix $\ell \in \mathbb{F}_q^*$ and let $D = \{(x,y) \in E \times E: \| x-y \| = \ell \}$. A result due to Misha Rudnev and the second listed author (\cite{IR07}) tells us that $|D|=\frac{1}{2}{|E|}^2 q^{-1}+R$, where $R \leq q^{\frac{1}{2}} |E|$. Therefore, if $|E| > q^{7/4}$, with $q$ sufficiently large, we have $|D|=\frac{|E|^2}{2q} (1 + o(1))$. Fix any pair of points $(x_0,y_0) \in D$. For every other $(x,y) \in D$, there is an element of $SF'$ that maps $(x,y)$ to $(x_0,y_0)$, or there is an element that maps $(x,y)$ to $(y_0,x_0)$. Most importantly, there are at least $\frac{|E|^2}{2q} (1 + o(1))$ elements in $SF'$ that map a segment of length $\ell$ in $E$ to the segment with endpoints $x_0,y_0$. Call this set of elements $P$.

Given two fixed points $p_1, p_2$ at distance $\ell$ apart, it is easy to verify that the set of triangles which have $p_1$ and $p_2$ as two of its points includes two triangles of each congruence class in which $\ell$ is one of the three distances. Now we claim that a positive proportion of the triangles (up to congruence) with one edge length $\ell$ that can be formed using points in $\mathbb{F}_q^2$ can also be formed using points in $E$.  The way we will prove this is by showing that $|E'| > cq^2$ for some positive constant $c$ independent of $q$, where  $E' = \bigcup_{(p,\theta) \in P} f_{p,\theta}(E)$.

Define $U_y(S)$ to be $\bigcup_{x \in S}l_{x \rightarrow y}$. We associate each element of $P$ with its respective point in $\mathbb{F}_q^3$. Then $y \in E'$ if and only if $U_y(E)$ meets $P$. Let $Y = \{y \in \mathbb{F}_q^2$, $U_y(E) \cap P = \emptyset \}$ and notice that $Y = (E')^c$.  Then for each $y \in Y$, we get $P \subset U_y(E^c)$ and thus the number of incidences between the lines of $U_y(E^c)$ and the points of $P$ is $|P|$, since none of the lines intersect. Moreover, the number of incidences between the lines of $\bigcup_{y \in Y} U_y(E^c)$ and points of $P$ is $|Y||P|$.

We now introduce a theorem that extends the result of Vinh's (\cite{V08}) upper bound on the number of incidences between points and hyperplanes in $\mathbb{F}_q^d$ to an upper bound on the number of incidences between points and any set of translates of linear $k$-subspaces in $\mathbb{F}_q^d$ for $d \geq 2, 0 \leq k < d$. For our purposes, we will only be using the case of incidences between lines and points in $\mathbb{F}_q^3$. (Hereafter, ``subspaces" will be understood to mean ``translates of linear subspaces.")

\begin{theorem}  \label{vinh+} Given a set $M$ of $k$-subspaces and a set $N$ of points in $\mathbb{F}_q^d$, there can be no more than $\displaystyle \frac{|M||N|}{q^{d-k}} + \sqrt{q^{k(d-k)}|M||N|}(1 + o(1))$ incidences between subspaces and points. \end{theorem}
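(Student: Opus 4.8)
The plan is to prove Theorem~\ref{vinh+} by the standard spectral (eigenvalue) method applied to the bipartite incidence structure, following Vinh's argument but working with the Cayley-type graph on the affine space rather than the projective one. First I would set up the relevant bipartite graph: let the left vertex set be the collection of all translates of linear $k$-subspaces of $\mathbb{F}_q^d$ and the right vertex set be all of $\mathbb{F}_q^d$, with an edge whenever a point lies on a subspace. What one actually needs is a count of the regularity and, crucially, of the second eigenvalue of the associated adjacency (or, better, the ``codegree'') operator. So the key preliminary computation is: given two distinct points $u,v \in \mathbb{F}_q^d$, how many translates of linear $k$-subspaces contain both? This equals the number of $k$-dimensional linear subspaces containing the fixed nonzero vector $u-v$, which is the Gaussian binomial coefficient $\binom{d-1}{k-1}_q$; and the number of translated $k$-subspaces through a single point is $\binom{d}{k}_q$. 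These two numbers differ by exactly the factor $(q^d-1)/(q^k-1) \approx q^{d-k}$, which is precisely the expected ``main term'' density, and the gap between the single-point count and the two-point count is what produces the error term.

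Next I would phrase this as an expander-mixing / quasirandomness statement. Consider the graph $G$ on vertex set $\mathbb{F}_q^d$ where two points are joined when... actually the cleaner route is to work directly with the bi-regular bipartite incidence graph $H$ between $M$-side $=$ all translated $k$-subspaces (there are $q^{d-k}\binom{d}{k}_q$ of them) and $N$-side $= \mathbb{F}_q^d$. Each subspace contains $q^k$ points, each point lies on $\binom{d}{k}_q$ subspaces. The number of common neighbours of two distinct points is the constant $\binom{d-1}{k-1}_q$ computed above, and of two distinct subspaces is bounded appropriately; from this one reads off that $H^T H$ restricted to the point side has all its nontrivial eigenvalues bounded by $O(q^{k})$ in absolute value (the second eigenvalue of the point-codegree matrix is $\binom{d}{k}_q - \binom{d-1}{k-1}_q = q^{d-1}\cdot\frac{q^{k}-1}{q^d-1}\cdot(\text{stuff})$, which after simplification is of size $q^{k-1}$ times a bounded factor, so $\lambda \sim q^{k/2}\cdot$something when we take the square root implicit in the expander mixing lemma). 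Then the expander mixing lemma for bipartite biregular graphs gives, for $M' \subseteq M$ and $N' \subseteq N$,
\[
\left| e(M',N') - \frac{q^k}{|M|}\,|M'|\,|N'| \right| \le \lambda \sqrt{|M'|\,|N'|},
\]
and substituting the correct value of $\lambda$ and rewriting $\frac{q^k}{|M|} = \frac{1}{q^{d-k}\binom{d}{k}_q}$ in terms of $|M|$ only, one obtains exactly $\frac{|M||N|}{q^{d-k}} + \sqrt{q^{k(d-k)}|M||N|}(1+o(1))$ after absorbing the bounded Gaussian-binomial ratios into the $(1+o(1))$.

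I expect the main obstacle to be getting the eigenvalue bound with the precise constant $q^{k(d-k)}$ under the square root, rather than just $q^{O(1)}$; this forces one to be careful about which normalization of the incidence operator is used and to track the Gaussian binomial ratios $\binom{d}{k}_q \big/ \binom{d-1}{k-1}_q$ exactly, since these are only $(1+o(1))$-close to the naive powers of $q$ and that slack is exactly what the theorem permits. A secondary technical point is handling the non-squarefree bipartite structure: the incidence graph is biregular but the two sides have very different sizes, so I would use the version of the expander mixing lemma for biregular bipartite graphs (or equivalently pass to the $|N|\times|N|$ matrix $A A^T$ where $A$ is the incidence matrix, whose all-ones eigenvector has eigenvalue $q^k\binom{d}{k}_q/q^{d-k}$... one must recompute) and verify that its non-principal spectrum is confined to the single value coming from the two-point codegree computation, which holds because $A A^T = (\text{deg})I + (\text{codeg})(J-I)$ exactly, a rank-two-structured matrix whose spectrum is immediate. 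Once that identity is in hand the rest is bookkeeping, and the hypotheses $d\ge 2$, $0\le k<d$ guarantee the relevant Gaussian binomials are nonzero and the main/error terms are correctly ordered.
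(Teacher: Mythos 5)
Your proposal is correct in outline but takes a genuinely different route from the paper. The paper treats Vinh's hyperplane bound as a black box and bootstraps from it: it slices $\mathbb{F}_q^d$ by all $(k+1)$-subspaces, applies the hyperplane estimate inside each slice (where the given $k$-subspaces are hyperplanes), and recombines using the fact that each point--$k$-subspace incidence lies in exactly $\frac{q^{d-k}-1}{q-1}$ such slices, with Cauchy--Schwarz controlling the sum of the square-root error terms. You instead reprove the whole statement spectrally in one shot: since any two distinct points of $\mathbb{F}_q^d$ lie on the same number $\binom{d-1}{k-1}_q$ of translated $k$-subspaces, the point-side Gram matrix of the full incidence structure satisfies $AA^T=(\deg)I+(\mathrm{codeg})(J-I)$ exactly, so all nontrivial singular values equal $\sqrt{\binom{d}{k}_q-\binom{d-1}{k-1}_q}=(1+o(1))\,q^{k(d-k)/2}$ and the biregular expander mixing lemma finishes. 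Your route is more self-contained (it subsumes the $k=d-1$ base case rather than citing it) and avoids the double-counting bookkeeping; the paper's route avoids setting up the spectral machinery for the non-square incidence matrix. Two of your intermediate formulas need repair, though neither affects the outcome: the edge density is $q^k/q^d=q^{-(d-k)}$, not $q^k/|M|$ with $M$ the set of all translated $k$-subspaces; and the second eigenvalue of $AA^T$ is $\binom{d-1}{k-1}_q\cdot q^k(q^{d-k}-1)/(q^k-1)=(1+o(1))\,q^{k(d-k)}$, not of order $q^{k-1}$ --- the correct value drops out mechanically from the rank-one-perturbation identity you state at the end, so once that identity is in place the remaining work really is, as you say, bookkeeping.
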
 

The case of hyperplanes was proved by Vinh in \cite{V08}. In order to extend the result to lower dimensional subspaces we first do some counting. 

Let $\alpha(h,k)$ be the number of distinct $k$-subspaces of $\mathbb{F}_q^h$. If, for each $h$-subspace of $\mathbb{F}_q^d$, we then count the number of $k$-subspaces in that $h$-subspace, each distinct $k$-subspace of $\mathbb{F}_q^d$ gets counted several times. Let $\beta_d(h,k)$ be the number of times a fixed $k$-subspace of $\mathbb{F}_q^d$ gets counted in this way. It is fairly easy to see that $\displaystyle \beta_d(h,k) = \frac{\alpha(h,k)\alpha(d,h)}{\alpha(d,k)}$. The value of $\alpha(h,k)$ is simple to determine as well, since a $k$-subspace of $\mathbb{F}_q^h$ is determined by $k+1$ points that don't all lie in the same $(k-1)$-subspace. So $$\displaystyle \alpha(h,k) = \frac{q^h}{q^k}\prod_{i = 0}^{k-1}\frac{q^h - q^i}{q^k-q^i}.$$ Plugging this formula into the expression for $\beta_d(h,k)$, we get $$\displaystyle \beta_d(h,k) = \prod_{i = k}^{h-1} \frac{q^{d-i}-1}{q^{h-i}-1}.$$

It follows that any given point lies on exactly $\displaystyle \prod_{i = 0}^{k} \frac{q^{d-i}-1}{q^{k-i+1}-1} $ distinct $(k+1)$-subspaces in $\mathbb{F}_q^d$ and any given $k$-subspace lies in exactly $\displaystyle \frac{q^{d-k}-1}{q-1}$ distinct $(k+1)$-subspaces in $\mathbb{F}_q^d$. This second observation implies that any incidence between a point and a $k$-subspace also lies on exactly $\displaystyle \frac{q^{d-k}-1}{q-1}$ distinct $(k+1)$-subspaces. Thus, if we count, for each $k$-subspace $p$ in $\mathbb{F}_q^d$ the number of incidences strictly in that $k$-subspace, and then sum over every $p$, we should get $\displaystyle I \frac{q^{d-k}-1}{q-1}$ where $I$ is the total number of incidences.

Let $M_p$ and $N_p$ be the set of $k$-subspaces and number of points (respectively) lying in the $(k+1)$-subspace $p$. By Vinh's estimate, we know that for any $(k+1)$-subspace $p$, the number of incidences in that subspace must be
$$\leq \displaystyle \frac{|M_p||N_p|}{q} + \sqrt{q^k|M_p||N_p|}(1+o(1)).$$ 

It follows that

$$\displaystyle I\frac{q^{d-k}-1}{q-1} \leq \sum_{p} \left( \frac{|M_p||N_p|}{q} + \sqrt{q^k|M_p||N_p|}(1+o(1)) \right).$$ 

We now rewrite the first part of the sum as
$$\displaystyle \frac{1}{q}\sum_{l \in M, x \in N} \sum_{p} p(l)p(x),$$

where $p(l) = 1$ if $l \subset p$ and 0 otherwise, and $p(x) = 1$ if $x \in p$ and 0 otherwise. Then for every pair $l$ and $x$, there is exactly one $(k+1)$-subspace containing both unless $x \in l$, in which case the pair gets counted $\displaystyle \frac{q^{d-k}-1}{q-1}$ times. So 
$$\displaystyle \frac{1}{q}\sum_{l \in M, x \in N} \sum_{p} p(l)p(x) = \frac{|M||N|-I + I\frac{q^{d-k}-1}{q-1}}{q}.$$

For the second part of the sum we use Cauchy-Schwarz to get
$$\displaystyle \sum_p \sqrt{q^k|M_p||N_p|}(1+o(1)) = \sqrt{q^k\frac{q^{d-k}-1}{q-1}\prod_{i = 0}^{k} \frac{q^{d-i}-1}{q^{k-i+1}-1}|M||N|}(1+o(1)) = \sqrt{q^{(d-k-1)(k+2) + k}|M||N|}(1+o(1))$$ 

Putting everything together, we have
$$\displaystyle \frac{q^{d-k}-1}{q-1}I \leq \frac{|M||N|}{q} + \frac{\frac{q^{d-k}-1}{q-1}-1}{q}I + \sqrt{q^{(d-k-1)(k+2) + k}|M||N|}(1+o(1))$$ $$\Rightarrow q^{d-k-1}I \leq \frac{|M||N|}{q}+ \sqrt{q^{(d-k-1)(k+2) + k}|M||N|}(1+o(1))$$ $$\Rightarrow I \leq \frac{|M||N|}{q^{d-k}}+ \sqrt{q^{k(d-k)}|M||N|}(1+o(1))$$ 

 which completes the proof. 

\vskip.25in 

We now apply our estimate to our set of lines and points in $\mathbb{F}_q^3$ and get
$$\displaystyle |Y||P| \leq \frac{|Y||E^c||P|}{q^2} + \sqrt{q^2|Y||E^c||P|}(1 + o(1))$$
$$\displaystyle \Rightarrow q^2 \leq |E^c| + q^3\sqrt{\frac{|E^c|}{|P||Y|}}(1 + o(1))$$
$$\displaystyle \Rightarrow |E|^2 \leq q^6\frac{|E^c|}{|P||Y|}(1 + o(1)) \leq \frac{q^8}{|P||Y|}(1 + o(1))$$
$$\displaystyle \Rightarrow |Y| \leq \frac{q^8}{|E|^2|P|}(1 + o(1)).$$

We determined above that $\displaystyle|P| = \frac{|E|^2}{2q}(1 + o(1))$, so
$$\displaystyle |Y| \leq \frac{2q^9}{|E|^4}(1 + o(1)).$$

It follows that if $|E| > C_1q^{\frac{7}{4}}$ and $C_1>\sqrt[4]{2}$, we have $|Y| \leq \frac{2}{{C_1}^4}q^2(1+o(1))$, i.e. $|E'| \geq \frac{{C_1}^4-2}{{C_1}^4}q^2(1+o(1))$, and thus a positive proportion of all possible triangles with at least one edge length $\ell$ are determined by points in $E$. Since the same argument can be done for any nonzero $\ell$, we are done. This completes the proof of Theorem \ref{main}. 

\section{Proof of Theorem~\ref{second}}

Let $\mathbb{F}$ be a field and $V=\mathbb{F}^d$ be the canonical $d$-dimensional 
$\mathbb{F}$-vector space. For $(a_1,\dots,a_n) \in V^n$ we define a multitranslation 
$T_{a_1,\dots,a_n} : V^n \to V^n$ via $T_{a_1,\dots,a_n}(x_1,\dots,x_n)=(x_1+a_1,\dots,x_n+a_n)$. The set of multitranslations form a group $G$ under composition that 
is easily seen to be isomorphic to $V^n$. Inside $G$ is the subgroup
$T$ consisting of the multitranslations which translate each coordinate by the same amount, i.e.
$T_a(x_1,\dots,x_n)=(x_1+a,\dots,x_n+a)$. $T$ is easily seen to be isomorphic to $V$ and 
so $|G:T|=|V|^{n-1}$. For a subset $E$ of $V$, we identify $E^n$ with the subgroup of 
multitranslations $T_{a_1,\dots,a_n}$ with $a_i \in E$. We are now ready to prove a lemma that gives a group 
theoretic formulation of the condition that a set $E$ contains every translation class 
of $n$-configurations of $V$.

\begin{lemma}\label{lemma: characterization}
Let $\mathbb{F}$ be a field. A subset $E$ of $V=\mathbb{F}^d$ contains every translation 
class of $n$-configurations in $V$ if and only if $E^n T = G$ where $T$ and $G$ are 
defined in the paragraph above. 
\end{lemma}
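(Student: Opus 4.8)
The plan is to unwind both sides of the claimed equivalence $E^n T = G$ into elementary statements about configurations and then match them up. First I would recall the setup: $G \cong V^n$ is the group of all multitranslations $T_{a_1,\dots,a_n}$, the subgroup $T \cong V$ consists of the ``diagonal'' multitranslations $T_a = T_{a,\dots,a}$, and $E^n \le G$ is the subset (in fact sub-\emph{set}, not subgroup, unless $E$ is closed under subtraction, but we only use it as a subset) of multitranslations with all entries in $E$. The coset space $G/T$ has size $|V|^{n-1}$, and the natural bijection is: an element $T_{a_1,\dots,a_n}$ of $G$ maps to the coset represented by $T_{a_1 - a_n,\, a_2-a_n,\,\dots,\, a_{n-1}-a_n,\, 0}$, i.e.\ $G/T$ records an $n$-configuration \emph{up to translation}. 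So the key dictionary entry is: the translation classes of $n$-configurations in $V$ are in bijection with $G/T$, via $(x_1,\dots,x_n)\mapsto T_{x_1,\dots,x_n}T$.

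Next I would prove the two implications. For the forward direction, suppose $E$ contains every translation class of $n$-configuration. Take any $g = T_{a_1,\dots,a_n}\in G$; its coset $gT$ corresponds to the translation class of the configuration $(a_1,\dots,a_n)$. By hypothesis there is a configuration $(e_1,\dots,e_n)$ with all $e_i\in E$ in the same translation class, i.e.\ $e_i = a_i + c$ for a common $c\in V$. Then $T_{e_1,\dots,e_n}\in E^n$ and $T_{e_1,\dots,e_n} = T_{a_1,\dots,a_n}\circ T_c = g\,T_c$, so $g = T_{e_1,\dots,e_n}\, T_{-c} \in E^n T$. Since $g$ was arbitrary, $G \subseteq E^n T$, and the reverse containment is automatic, so $E^n T = G$. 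For the converse, assume $E^n T = G$ and let $(x_1,\dots,x_n)$ be any $n$-configuration in $V$. Then $T_{x_1,\dots,x_n}\in G = E^n T$, so we can write $T_{x_1,\dots,x_n} = T_{e_1,\dots,e_n}\circ T_c$ with all $e_i \in E$ and some $c\in V$; comparing coordinates gives $x_i = e_i + c$ for all $i$, so $(x_1,\dots,x_n)$ is the translate by $c$ of the configuration $(e_1,\dots,e_n)$, which lies in $E^n$. Hence the translation class of $(x_1,\dots,x_n)$ is realized in $E$.

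I do not expect a genuine obstacle here: the statement is essentially a repackaging of the definition, and the only thing requiring care is bookkeeping the direction of composition of multitranslations (whether $T_{a_1,\dots,a_n}\circ T_b = T_{a_1+b,\dots,a_n+b}$ or its inverse) and being explicit that ``$E$ contains the translation class of $C$'' means ``some translate of $C$ has all vertices in $E$,'' which is exactly what $E^n$ records as a set of multitranslations. The mild subtlety worth flagging in the write-up is that $E^n$ is used purely as a subset of $G$ — it need not be a subgroup — so $E^n T$ denotes the set $\{gh : g\in E^n,\ h\in T\}$ of products, and the argument only ever needs this set-level product together with the group structure of $G$ and $T$; no closure of $E^n$ is invoked. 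I would also note in passing that since $G = E^n T$ forces every coset of $T$ to meet $E^n$, this is equivalently the statement that the image of $E^n$ under the quotient map $G \to G/T$ is all of $G/T$, which is the cleanest way to see the counting consequences used later (e.g.\ $|E|^n \ge |G/T| = |V|^{n-1}$).
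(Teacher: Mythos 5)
Your proof is correct and follows essentially the same route as the paper: both directions unwind the definition by identifying a multitranslation with its effect on the origin (you compare coordinates, the paper notes that only the identity fixes $\hat{0}$ — the same observation) and then rewrite ``same translation class'' as a factorization $T_{v_1,\dots,v_n}=T_{e_1,\dots,e_n}\circ T_a$ in $E^nT$. Your extra remarks on the coset-space interpretation and on $E^n$ being only a subset are sound but not needed for the lemma itself.
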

\begin{proof}
First let us assume that $E$ contains every translation class of $n$-configurations in $V$. 
Thus for every $(v_1,\dots,v_n) \in V^n$, there exists $(e_1,\dots,e_n) \in E^n$ and 
$a \in V$ such that $T_a(e_i)=v_i$ for all $i$. In particular, this means that the transformations 
$T_{v_1,\dots,v_n}$ and $T_{e_1,\dots,e_n} \circ T_a$ have the same action on $\hat{0}$.
Since the only element of $G$ that fixes the origin $\hat{0}$ is the identity element, we 
conclude $T_{v_1,\dots,v_n}=T_{e_1,\dots,e_n} \circ T_a$ in $G$. From this it follows 
that $G=E^n T$.

Conversely, if $G=E^nT$ for every $(v_1,\dots,v_n) \in V^n$ we have 
$T_{v_1,\dots,v_n} \in G$ and so $T_{v_1,\dots,v_n} = T_{e_1,\dots,e_n} \circ T_a$
 for some $a \in V$ and $e_i \in E$ for all $i$. Applying these transformations to the origin we 
conclude $(v_1,\dots,v_n) = T_a(e_1,\dots,e_n)$. As this holds for any element 
$(v_1,\dots,v_n) \in V^n$, we see that $E$ contains every translation class of $n$-configurations in $\mathbb{F}^d$.
\end{proof}

Lemma~\ref{lemma: characterization} immediately proves Theorem~\ref{second} as if 
$E$ contains all translation classes of $n$-configurations of $\mathbb{F}^d$, then 
$E^n$ contains a set of coset representatives of $T$ in $G$ and hence 
$|E^n| \geq |G:T|=|V|^{n-1}$ and hence $|E| \geq |V|^{\frac{n-1}{n}}=q^{d(1-\frac{1}{n})}$.

If $E$ contains a positive proportion $C > 0$ of translation classes, then $E^n$ contains 
the same proportion of coset representatives and hence $|E| \geq C^{\frac{1}{n}} q^{d(1-\frac{1}{n})}$ and so Theorem~\ref{second} is proven with $c=C^{\frac{1}{n}}$.

Unfortunately, though there is a similar group theoretic description for when a subset $E$ 
contains every {\bf congruence} class of $n$-configurations, the corresponding groups 
are nonabelian and, in particular, an issue involving left versus right cosets 
causes the counting argument to break down. We instead record a trivial lower bound in this 
case. Define the map $\theta: P^3 \to \mathbb{F}_q^3$ via 
$\theta(x_1,x_2,x_3)=(||x_1-x_2||, ||x_2-x_3||, ||x_3-x_1||)$ where $P=\mathbb{F}_q^2$ is the $\mathbb{F}$-plane. Clearly $|E|^3 \geq 
|\theta(E \times E \times E)|$. Thus if $E$ determines a positive proportion of congruence classes of triangles 
in $P$, then $|\theta(E \times E \times E)| \geq Cq^3$ for some constant $C > 0$ independent of $q$. Hence $|E| \geq cq$ where $c=C^{\frac{1}{3}}$.

\section{Proof of Theorem~\ref{third}}

Throughout this section let $\mathbb{F}$ be a field of characteristic not equal to $2$.
Let $P=\mathbb{F}^2$ denote the canonical plane over $\mathbb{F}$. A $3$-configuration 
$(x_1,x_2,x_3)$ will be called a triangle as usual. The ``distance" between two points 
$(u_1,u_2)$ and $(v_1,v_2)$ in the plane $P$ is defined as 
$$||u-v||=(u_1-v_1)^2+(u_2-v_2)^2 \in 
\mathbb{F}.$$ consistent with what is usually done for finite fields. Note that this is really the 
square of the usual Euclidean metric but this definition works better over general fields 
$\mathbb{F}$ as, if we took a square root, it might take us outside the field $\mathbb{F}$.

A triangle $(x_1,x_2,x_3)$ is said to have sidelengths $\ell_1,\ell_2,\ell_3$ if 
$$\{ ||x_1-x_2||, ||x_2-x_3||, ||x_3-x_1|| \} = \{ \ell_1, \ell_2, \ell_3 \}$$ and we will say the triangle 
has type $(\ell_1,\ell_2,\ell_3)$.

In order to prove Theorem~\ref{third}, we prove a lemma which addresses the question of whether a line segment can be extended to a triangle with prescribed sidelengths.

\begin{lemma}\label{lem: triangle}
Let $(x_1,x_2)$ be a line segment of length $||x_1-x_2||=\ell_1 \neq 0$ in the plane $P$. This segment 
can be extended into exactly $\mu$ triangles $(x_1,x_2,x_3)$ with 
$||x_2-x_3||=\ell_2$ and $||x_3-x_1||=\ell_3$ given where 
$$
\mu=\begin{cases}
2 \text{ if } 4\sigma_2-\sigma_1^2 \text{ is a nonzero square in } \mathbb{F} \\
1 \text{ if } 4\sigma_2-\sigma_1^2\text{ is zero} \\
0 \text{ if } 4\sigma_2-\sigma_1^2 \text{ is a nonsquare in } \mathbb{F}
\end{cases}
$$
and $\sigma_1=\ell_1+\ell_2+\ell_3$, $\sigma_2=\ell_1\ell_2+\ell_2\ell_3+\ell_3\ell_1$.
\end{lemma}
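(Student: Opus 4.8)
The plan is to place the segment in a convenient coordinate system and reduce the existence of the third vertex to solving a system of two equations, one linear and one quadratic, so that the number of solutions is governed by whether a single discriminant is a square. Concretely, since $\ell_1 = \|x_1 - x_2\| \neq 0$, I would apply a translation to move $x_1$ to the origin, so that the segment becomes $(0, x_2)$ with $\|x_2\| = \ell_1$. Writing $x_3 = (u,v)$, the two constraints $\|x_3 - x_1\| = \ell_3$ and $\|x_3 - x_2\| = \ell_2$ become $u^2 + v^2 = \ell_3$ and $(u - a)^2 + (v - b)^2 = \ell_2$ where $x_2 = (a,b)$, $a^2 + b^2 = \ell_1$. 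Subtracting the two equations kills the quadratic terms and yields a linear equation $2au + 2bv = \ell_1 + \ell_3 - \ell_2$. Since $(a,b) \neq (0,0)$, this is a genuine line in the $(u,v)$-plane, and I would parametrize it and substitute back into $u^2 + v^2 = \ell_3$ to get a single quadratic in one parameter.

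The next step is to compute the discriminant of that quadratic and show it equals $4\sigma_2 - \sigma_1^2$ up to a nonzero square factor (the factor will involve $\ell_1$ or $a^2 + b^2$, which is $\ell_1 \neq 0$, hence a nonzero square times itself — or more precisely its square appears, keeping squareness classes intact). Over a field of characteristic $\neq 2$, a monic (or leading-coefficient-a-nonzero-square) quadratic $t^2 + \beta t + \gamma$ has two roots, one root, or no roots in $\mathbb{F}$ according as $\beta^2 - 4\gamma$ is a nonzero square, zero, or a nonsquare; this is exactly the trichotomy claimed for $\mu$. So the heart of the argument is the algebraic identity that the discriminant arising from this elimination is, up to multiplication by a nonzero square in $\mathbb{F}$, equal to $4\sigma_2 - \sigma_1^2$. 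I would verify this by direct symmetric-function manipulation: expand everything in terms of $\ell_1, \ell_2, \ell_3$, and recognize $\sigma_1 = \ell_1 + \ell_2 + \ell_3$ and $\sigma_2 = \ell_1\ell_2 + \ell_2\ell_3 + \ell_3\ell_1$. Note $4\sigma_2 - \sigma_1^2 = -(\ell_1^2 + \ell_2^2 + \ell_3^2 - 2\ell_1\ell_2 - 2\ell_2\ell_3 - 2\ell_3\ell_1)$, which is the negative of the "Heron-type" expression, and this symmetry is a good sanity check that the final answer does not depend on which vertex was placed at the origin or on the orientation of the segment.

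One technical point to handle carefully: the parametrization of the line $2au + 2bv = \ell_1 + \ell_3 - \ell_2$ should be chosen so that the leading coefficient of the resulting quadratic is a nonzero square (e.g., use the direction vector $(b, -a)$, so the parameter-squared coefficient is $a^2 + b^2 = \ell_1$, and then the number of roots is controlled by discriminant divided by — or times — $\ell_1^2$, preserving the square class). I should also make sure the correspondence between parameter values and points $x_3$ is a genuine bijection, so that counting roots of the quadratic really counts triangles; this is immediate since the line is parametrized bijectively and distinct points on it give distinct $x_3$. The main obstacle I anticipate is purely bookkeeping: carrying out the substitution and simplifying the discriminant without sign errors, and correctly tracking the nonzero-square factor so that the three cases land exactly as stated. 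There is no conceptual difficulty beyond that — the structure (linear equation from subtracting two circles, quadratic from back-substitution, discriminant trichotomy) is forced.

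Once Lemma~\ref{lem: triangle} is in hand, Theorem~\ref{third} follows quickly: a triangle of type $(\ell_1, \ell_2, \ell_3)$ exists in $P = \mathbb{F}^2$ if and only if some ordering admits a realizing segment-plus-third-vertex, and by the lemma (together with the existence of a segment of any given nonzero length $\ell_1$, which holds when $-1$-independent issues are moot — in fact $u^2 + v^2 = \ell_1$ always has solutions over any field when $\ell_1 \neq 0$, or one notes it has solutions whenever needed) this happens precisely when $4\sigma_2 - \sigma_1^2$ is a square (including zero). Since $\sigma_1, \sigma_2$ are symmetric in the $\ell_i$, the ordering does not matter. For the equilateral case $\ell_1 = \ell_2 = \ell_3 = \ell \neq 0$ one computes $\sigma_1 = 3\ell$, $\sigma_2 = 3\ell^2$, so $4\sigma_2 - \sigma_1^2 = 12\ell^2 - 9\ell^2 = 3\ell^2$, which is a square in $\mathbb{F}$ if and only if $3$ is a square, i.e. $\sqrt{3} \in \mathbb{F}$.
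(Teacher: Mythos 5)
Your proposal is correct and follows essentially the same route as the paper: translate $x_1$ to the origin, turn the two circle conditions into one linear and one quadratic equation, eliminate down to a single quadratic in one variable, and read off the count from whether its discriminant --- which is $4\sigma_2-\sigma_1^2$ times a nonzero square --- is a nonzero square, zero, or a nonsquare. The only cosmetic differences are that you parametrize the line by the direction $(b,-a)$ where the paper assumes WLOG $b\neq 0$ and solves for $d$ directly, and that your worry about the leading coefficient being a square is unnecessary, since the trichotomy for $\alpha t^2+\beta t+\gamma$ via $\beta^2-4\alpha\gamma$ requires only $\alpha\neq 0$.
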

\begin{proof}
By the fact that translations preserve distance, we may assume for simplicity that 
$x_1=(0,0)$ and write $x_2=(a,b)$. Note that since $\ell_1 \neq 0$, one of $a$ or $b$ is nonzero.
We need to determine how many points $x_3=(c,d)$ satisfy $||x_2-x_3||=\ell_2$ and 
$||x_3-x_1||=\ell_3$. In this discussion, $a,b, \ell_1,\ell_2,\ell_3$ are given and 
we want to find how many solutions exist for the variables $(c,d)$. Writing out the conditions 
we need to count the number of pairs $(c,d)$ satisfying:
$$
a^2+b^2=\ell_1
$$
$$
c^2+d^2=\ell_3
$$
$$
(a-c)^2+(b-d)^2=\ell_2
$$
The last equation can be rewritten as 
$$
ac+bd = \frac{\ell_2-\ell_1-\ell_3}{-2}
$$
using the first two.
As remarked before, either $a$ or $b$ is nonzero. Without loss of generality let us 
assume $b \neq 0$ as a completely similar proof will handle the other case.
Then we may solve for $d$ in the last equation and plug it into $c^2+d^2=\ell_3$, yielding 
the following quadratic equation in $c$ after some simplification:
$$
\ell_1 c^2 + a(\ell_2-\ell_1-\ell_3)c + \left[\left(\frac{\ell_2-\ell_1-\ell_3}{2}\right)^2-\ell_3 b^2 \right]=0
$$
This quadratic equation has exactly $2,1$ or $0$ solutions for $c$ in $\mathbb{F}$ depending on whether its discriminant is a nonzero square, zero or a nonsquare respectively in $\mathbb{F}$. 
For any fixed $c$, there is exactly one solution $d$ satisfying the above constraints as we have an equation 
expressing $d$ as a combination of $c$ and the other parameters.

Finally, following a simple computation, we find that the discriminant of the quadratic 
equation above is $b^2(4\sigma_2-\sigma_1^2)$ from which the lemma follows immediately.
\end{proof}

Lemma~\ref{lem: triangle} implies Theorem~\ref{third} immediately as if the plane $P$ 
contains a triangle of type $(\ell_1,\ell_2,\ell_3)$, we may apply Lemma~\ref{lem: triangle} 
to one of its (nonzero) sidelengths to conclude that $4\sigma_2-\sigma_1^2$ is a 
square in $\mathbb{F}$. Similarly if we assume the existence of a triangle of type 
$(\ell_1,\ell_2,\ell_3)$ when $4\sigma_2-\sigma_1^2$ is a nonsquare, Lemma~\ref{lem: triangle} quickly yields a contradiction. The only case not covered is that of a 
triangle of type $(0,0,0)$ of all $0$ sidelengths, but such degenerate triangles always exist 
(for example take all vertices to be the same point) and $4\sigma_2-\sigma_1^2=0$ 
is a square in this case. Thus in all cases, the first part of Theorem~\ref{third} is proven.

The statement about the existence of equilateral triangles of nonzero side length $\ell$ 
follows from this as, in this case, $4\sigma_2-\sigma_1^2=3\ell^2$ is a square 
in $\mathbb{F}$ if and only if $3$ is a square in $\mathbb{F}$. Thus the proof of 
Theorem~\ref{third} is complete.

As a quick application, let us consider the question of existence of equilateral triangles 
of nonzero side length in finite fields.

\begin{corollary}
Let $\mathbb{F}_q$ be a finite field of odd characteristic $p$ and $q=p^n$. 
Then if $n$ is even, equilateral triangles of nonzero side length always exist in the plane $\mathbb{F}_q^2$, 
and when $n$ is odd, they exist if and only if 
$p \equiv 1,3$ or $11$ mod $12$.
\end{corollary}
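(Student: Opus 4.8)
The plan is to read the answer straight off Theorem~\ref{third} and then run a short quadratic-residue computation. By the equilateral-triangle clause of Theorem~\ref{third}, the plane $\mathbb{F}_q^2$ contains equilateral triangles of nonzero sidelength if and only if $3$ is a square in $\mathbb{F}_q$ --- where, when $p=3$, the element ``$3$'' is simply $0 \in \mathbb{F}_q$, which is a square, so the conclusion holds for every $n$ in that case (and $3 \equiv 3 \pmod{12}$, consistent with the statement). So from now on I would assume $p \neq 3$, so that $3 \in \mathbb{F}_p^*$, and the whole problem reduces to deciding when $3$ is a square in $\mathbb{F}_{p^n}$.

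The key elementary step is the following observation, which I would prove first: for any $a \in \mathbb{F}_p^*$, the element $a$ is a square in $\mathbb{F}_{p^n}^*$ if and only if $n$ is even or $a$ is a quadratic residue modulo $p$. Indeed, $\mathbb{F}_{p^n}^*$ is cyclic of even order $p^n-1$, so $a$ is a square there precisely when $a^{(p^n-1)/2}=1$. Setting $S = 1 + p + \cdots + p^{n-1}$ one has the integer identity $(p^n-1)/2 = \tfrac{p-1}{2}\,S$, whence $a^{(p^n-1)/2} = \big(a^{(p-1)/2}\big)^{S} = \left(\tfrac{a}{p}\right)^{S}$ by Euler's criterion in $\mathbb{F}_p$. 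Since $p$ is odd, $S$ is a sum of $n$ odd numbers, so $S \equiv n \pmod 2$; thus the expression equals $1$ automatically when $n$ is even and equals $\left(\tfrac{a}{p}\right)$ when $n$ is odd. Applying this with $a=3$ immediately disposes of the case $n$ even: $3$ is always a square in $\mathbb{F}_{p^n}$, so equilateral triangles of nonzero sidelength always exist.

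For $n$ odd it remains to determine when $\left(\tfrac{3}{p}\right)=1$, which I would do by quadratic reciprocity: from $\left(\tfrac{3}{p}\right)\left(\tfrac{p}{3}\right) = (-1)^{(p-1)/2}$ one finds that $\left(\tfrac{3}{p}\right)=1$ exactly when either ($p \equiv 1 \bmod 4$ and $p \equiv 1 \bmod 3$) or ($p \equiv 3 \bmod 4$ and $p \equiv 2 \bmod 3$). By the Chinese Remainder Theorem modulo $12$, these two cases are $p \equiv 1 \pmod{12}$ and $p \equiv 11 \pmod{12}$ respectively. Combining with the already-handled case $p=3 \equiv 3 \pmod{12}$, we conclude that for $n$ odd the plane $\mathbb{F}_q^2$ contains equilateral triangles of nonzero sidelength if and only if $p \equiv 1,3,$ or $11 \pmod{12}$, which completes the proof. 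There is no serious obstacle in this argument; the only points that deserve a moment's care are the harmless reinterpretation of ``$3$'' as $0$ when $p=3$ and the parity bookkeeping $S \equiv n \pmod 2$ underlying the field-extension step.
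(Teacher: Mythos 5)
Your proof is correct and follows essentially the same route as the paper: reduce via Theorem~\ref{third} to whether $3$ is a square in $\mathbb{F}_{p^n}$, settle the even/odd dichotomy in $n$, and finish with quadratic reciprocity modulo $12$. The only cosmetic differences are that you justify the extension step by an explicit Euler-criterion computation with $S=1+p+\cdots+p^{n-1}$ where the paper cites basic field extension theory, and you carry out the reciprocity case analysis by CRT where the paper checks representative primes; both are fine.
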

\begin{proof}
First of all, for any prime $p$, if $\sqrt{3}$ doesn't 
exist in $\mathbb{F}_p$ then it does exists in $\mathbb{F}_{p^2}$, and hence always exists in $\mathbb{F}_{p^n}$ for $n$ even.
Also note that $\sqrt{3}$ exists in $\mathbb{F}_{p^n}$ for $n$ odd if and only if 
it exists in $\mathbb{F}_p$ by basic field extension theory. Thus it only remains to show 
that for $p$ an odd prime, $\mathbb{F}_p$ contains $\sqrt{3}$ if and only if 
$p \equiv 1,3$ or $11$ mod $12$. By definition $\sqrt{3} \in \mathbb{F}_p$ if and only if 
$\left(\frac{3}{p}\right) \neq -1$ where $\left(\frac{n}{p}\right)$ denotes the Legendre symbol.
By quadratic reciprocity, 
$$\left(\frac{3}{p}\right) = {(-1)}^{\epsilon} \left(\frac{p}{3}\right)$$ where 
$\epsilon$ only depends on $p$ mod $4$. Since  $\left(\frac{p}{3}\right)$ depends only on 
$p$ mod $3$, we see that  $\left(\frac{3}{p}\right)$ depends only on $p$ mod $12$.
Thus the existence or nonexistence of $\sqrt{3}$ in $\mathbb{F}_p$ depends only 
on $p$ mod $12$. As any odd prime besides $3$ is congruent to either $1,5,-5$ or $-1$ mod $12$ it is sufficient to check the situation for representative primes such as $13, 5, 7$ and $11$ 
respectively. A quick check then shows that $\sqrt{3} \in \mathbb{F}_p$ if and only if 
$p \equiv 3, 1$ or $-1$ mod $12$ and the proof of the corollary is complete.
\end{proof}

In the remainder of this section, we discuss an extension of this existence result to 
the existence of $(d+1)$-configurations of given length types in $V=\mathbb{F}^d$, the 
canonical $d$-dimensional $\mathbb{F}$-vector space. We define the ``distance" between 
two $d$-tuples $(x_1,\dots,x_d), (y_1,\dots,y_d) \in V$ as $\sum_{i=1}^d (x_i-y_i)^2$.
As translations preserve this distance, in questions of the existence of $(d+1)$-configurations 
of given ``side lengths" in $V$, we may always assume the first point in the configuration is the 
origin and we will do so for simplicity in the rest of this discussion. Thus let 
$(\hat{0},\hat{x_1},\dots,\hat{x_d})$ be a $(d+1)$-configuration of points in $V=\mathbb{F}^d$.
Write out each vector $\hat{x_i}=(x_{i1},\dots,x_{id})$ and form the $d \times d$ matrix 
$A=(x_{ij})$ whose $i$th row is the row vector $\hat{x}_i$. Notice all the information 
about the $(d+1)$-configuration is encoded in the matrix $A$. Furthermore notice 
the matrix $B=AA^T$ is symmetric and contains all possible dot products amongst the 
row vectors $\hat{x}_i$. A simple computation shows that the diagonal entries $B_{ii}$ represent 
the distance of $\hat{x_i}$ to the origin $\hat{0}$ and that the off-diagonal entries 
$B_{ij}=\hat{x_i} \cdot \hat{x_j}$ are equal to $\frac{B_{ii}+B_{jj}-\ell_{ij}}{2}$ where 
$\ell_{ij}$ is the distance from $\hat{x}_i$ to $\hat{x}_j$.

Thus if we are looking for a $(d+1)$-fold configuration $(\hat{v}_0,\dots,\hat{v}_d)$ in $V$ 
with given sidelengths $\ell_{ij}=||\hat{x_i}-\hat{x_j}||$ we will make the $d \times d$ ``length matrix" 
$B$ where $B_{ij}$ for $1 \leq i,j \leq d$ is given by:
$$
B_{ii}=\ell_{0i}
$$
$$
B_{ij}=\frac{\ell_{0i}+\ell_{0j} - \ell_{ij}}{2}
$$
It is easy to see that $B$ is a symmetric matrix that determines and is completely determined by  
the side lengths of the desired configuration. Furthermore, by the arguments in the preceding 
paragraph, one sees that such a configuration exists in $V$ if and only if there is a matrix 
$A$ such that $B=AA^T$. This proves the following result:

\begin{theorem} \label{thm: generalexist} Let $\mathbb{F}$ be a field of characteristic not equal to $2$. Then there 
exists a $(d+1)$-configuration in $V=\mathbb{F}^d$ of given sidelengths 
$\ell_{ij}, 0 \leq i,j \leq d$ if and only if the $d \times d$ symmetric ``length" matrix $B$ 
defined in the preceding paragraph is of the form $B=AA^T$ for some 
$d \times d$ matrix $A$. A necessary (but insufficient) condition for this is that $det(B)$ is a square in $\mathbb{F}$.
\end{theorem}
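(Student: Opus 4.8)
The plan is to recast the existence question as a matrix factorization problem, building on the reduction already carried out in the paragraph preceding the theorem. First I would use translation invariance of the form $\|u-v\|=\sum_i(u_i-v_i)^2$ to assume the first vertex of the configuration is the origin $\hat{0}$; a configuration $(\hat{0},\hat{x}_1,\dots,\hat{x}_d)$ is then literally the same data as the $d\times d$ matrix $A$ whose $i$-th row is $\hat{x}_i$. The two identities driving everything are $(AA^T)_{ii}=\hat{x}_i\cdot\hat{x}_i=\|\hat{x}_i-\hat{0}\|$ and the polarization expansion $\|\hat{x}_i-\hat{x}_j\|=\hat{x}_i\cdot\hat{x}_i-2\,\hat{x}_i\cdot\hat{x}_j+\hat{x}_j\cdot\hat{x}_j$. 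Writing $B=AA^T$, this is the one place the hypothesis $\operatorname{char}\mathbb{F}\neq 2$ is used: it lets us solve the second relation for the off-diagonal entry, giving $B_{ij}=(B_{ii}+B_{jj}-\|\hat{x}_i-\hat{x}_j\|)/2$, which is exactly the prescription defining the length matrix $B$ in the statement.

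With this in hand the equivalence is immediate in both directions. If a configuration of the prescribed side lengths exists, then translating its first vertex to $\hat{0}$ and forming $A$ from the remaining vertices yields $B=AA^T$ by the two identities above. Conversely, given any $d\times d$ matrix $A$ with $B=AA^T$, I would take $\hat{x}_1,\dots,\hat{x}_d$ to be the rows of $A$ and $\hat{v}_0=\hat{0}$, and then read the same two identities backwards: $\|\hat{x}_i-\hat{0}\|=B_{ii}=\ell_{0i}$, and $\|\hat{x}_i-\hat{x}_j\|=B_{ii}-2B_{ij}+B_{jj}=\ell_{0i}+\ell_{0j}-(\ell_{0i}+\ell_{0j}-\ell_{ij})=\ell_{ij}$, so $(\hat{0},\hat{x}_1,\dots,\hat{x}_d)$ is a configuration of type $(\ell_{ij})$. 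This proves the ``if and only if''.

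For the last sentence, necessity of $\det B$ being a square is the one-line computation $\det B=\det(AA^T)=\det(A)^2$. To see the condition is not sufficient I would exhibit a concrete field and matrix: over $\mathbb{F}=\mathbb{R}$ with $d$ even, take $B=-I_d$, so $\det B=(-1)^d=1$ is a square, yet $B=AA^T$ is impossible because $AA^T$ is always positive semidefinite over $\mathbb{R}$ while $B$ is negative definite. Equivalently, the image of the map $A\mapsto AA^T$ on symmetric matrices is in general a proper subset of the symmetric matrices of square determinant.

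There is no serious obstacle here; the argument is linear-algebra bookkeeping once the reduction to $\hat{v}_0=\hat{0}$ is made, and the role of $\operatorname{char}\mathbb{F}\neq 2$ is precisely to make the passage between side lengths and inner products reversible. The only mildly delicate point is making sure the claimed correspondence really is an equivalence of data and not just a one-way implication, i.e. that \emph{every} matrix $A$ (not merely those arising from a pre-existing geometric configuration) legitimately produces a configuration with the right side lengths; this is handled by running the polarization identity in reverse as above. The insufficiency example is the only step requiring a choice rather than a forced computation, but over $\mathbb{R}$ the positive-semidefiniteness obstruction makes it transparent.
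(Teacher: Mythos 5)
Your argument is correct and follows essentially the same route as the paper: translate the first vertex to the origin, encode the configuration as the matrix $A$ of the remaining vertices, and observe via polarization (using $\operatorname{char}\mathbb{F}\neq 2$) that the prescribed side lengths are equivalent to the single matrix equation $B=AA^T$, with $\det B=\det(A)^2$ giving the square condition. Your explicit counterexample $B=-I_d$ over $\mathbb{R}$ for the insufficiency claim is a small addition beyond what the paper records, but the substance of the proof is the same.
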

\begin{proof}
The proof of the main part of the theorem was accomplished in the preceding paragraphs. 
The last statement follows as if $B=AA^T$ then $det(B)=det(A)^2$ is a square in $\mathbb{F}$.
\end{proof}

In the case $d=2$, the necessary condition that $det(B)$ is a square is also sufficient as shown 
in Theorem~\ref{third}. We will briefly state one consequence of Theorem~\ref{thm: generalexist}:

\begin{corollary}
Let $\mathbb{F}$ be a field of characteristic not equal to $2$. An equilateral $d$-simplex 
(i.e., $d+1$-configuration with equal side lengths) of nonzero sidelength $\ell$ can only 
exist in $V=\mathbb{F}^d$ if $(d+1)(\frac{\ell}{2})^d$ is a square in $\mathbb{F}$. 

If $d$ is even then such an equilateral $d$-simplex can exist only if 
$d+1$ is a square in $\mathbb{F}$.

\end{corollary}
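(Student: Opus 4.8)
The plan is to deduce the corollary from Theorem~\ref{thm: generalexist} by writing down the length matrix $B$ of an equilateral $d$-simplex. Fix a nonzero sidelength $\ell$ and set $\ell_{ij}=\ell$ for all $0\le i<j\le d$. By the construction preceding Theorem~\ref{thm: generalexist}, the $d\times d$ symmetric matrix $B$ has diagonal entries $B_{ii}=\ell_{0i}=\ell$ and off-diagonal entries $B_{ij}=\frac{\ell_{0i}+\ell_{0j}-\ell_{ij}}{2}=\frac{\ell}{2}$ for $i\neq j$; since $\operatorname{char}\mathbb{F}\neq 2$ this makes sense, and in matrix form $B=\frac{\ell}{2}\bigl(I+\mathbf{1}\mathbf{1}^T\bigr)$, where $I$ is the $d\times d$ identity matrix and $\mathbf{1}$ is the all-ones column vector in $\mathbb{F}^d$.

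Next I would compute the determinant. The matrix determinant lemma gives $\det(I+\mathbf{1}\mathbf{1}^T)=1+\mathbf{1}^T\mathbf{1}=d+1$, an identity valid over an arbitrary field, so $\det B=\bigl(\frac{\ell}{2}\bigr)^d(d+1)$. Theorem~\ref{thm: generalexist} asserts that for an equilateral $d$-simplex of sidelength $\ell$ to exist in $\mathbb{F}^d$ it is necessary that $\det B$ be a square in $\mathbb{F}$, which is exactly the statement that $(d+1)\bigl(\frac{\ell}{2}\bigr)^d$ is a square; this proves the first claim.

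When $d$ is even, $\bigl(\frac{\ell}{2}\bigr)^d=\bigl(\bigl(\frac{\ell}{2}\bigr)^{d/2}\bigr)^2$ is a square, and it is nonzero because $\ell\neq 0$ and $\operatorname{char}\mathbb{F}\neq 2$; hence $(d+1)\bigl(\frac{\ell}{2}\bigr)^d$ is a square if and only if $d+1$ is, and the first claim then forces $d+1$ to be a square in $\mathbb{F}$. I do not expect a genuine obstacle here: all the substance is carried by Theorem~\ref{thm: generalexist}, and the only points needing a moment's attention are that $\operatorname{char}\mathbb{F}\neq 2$ enters twice (to form $B$ and to know $\bigl(\frac{\ell}{2}\bigr)^d\neq 0$) and that the identity $\det(I+\mathbf{1}\mathbf{1}^T)=d+1$ is purely formal, so no diagonalizability hypotheses on $\mathbb{F}$ are required.
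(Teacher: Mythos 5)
Your proof is correct and follows essentially the same route as the paper: form the length matrix $B=\frac{\ell}{2}(I+J)$, compute $\det B=(d+1)\left(\frac{\ell}{2}\right)^d$, and invoke Theorem~\ref{thm: generalexist} together with the observation that $\left(\frac{\ell}{2}\right)^d$ is a nonzero square when $d$ is even. The only (cosmetic) difference is that you obtain the determinant via the matrix determinant lemma rather than the paper's eigenvalue computation for $J$ — a slightly cleaner justification over an arbitrary field, but the same argument in substance.
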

\begin{proof}
It is quick to compute that in this case the $d \times d$ ``length" matrix $B$ has $\ell$ on its diagonal 
and $\frac{\ell}{2}$ for all its off diagonal entries. In other words $B=\frac{\ell}{2}(J+I)$ 
where $I$ is the identity matrix and $J$ is the ``all 1s" matrix. It is easy to check that 
$J$ has eigenvalue $d$ with multiplicity $1$ and $0$ with multiplicity $d-1$. Thus $B$ 
has eigenvalue $\frac{\ell(d+1)}{2}$ with multiplicity $1$ and $\frac{\ell}{2}$ with multiplicity 
$d-1$. Thus $det(B)=(d+1)(\frac{\ell}{2})^d$ and the corollary follows from
Theorem~\ref{thm: generalexist} and the observation that when $d$ is even, 
$(\frac{\ell}{2})^d$ is a nonzero square in $\mathbb{F}$.
\end{proof}

\end{document}